\newtheorem{thm}{Theorem}[section]
\newtheorem{cor}[thm]{Corollary}
\newtheorem{lem}[thm]{Lemma}
\newtheorem{exam}[thm]{Example}
\numberwithin{equation}{section}
\begin{document}

\title{The g-Drazin inverses of anti-triangular block operator matrices}

\author{Huanyin Chen}
\author{Marjan Sheibani}
\address{
Department of Mathematics\\ Hangzhou Normal University\\ Hang -zhou, China}
\email{<huanyinchen@aliyun.com>}
\address{Women's University of Semnan (Farzanegan), Semnan, Iran}
\email{<sheibani@fgusem.ac.ir>}

\subjclass[2010]{15A09, 47A11, 47A53.} \keywords{g-Drazin inverse; Drazin inverse; operator matrix; Banach algebra.}

\begin{abstract}
An element $a$ in a Banach algebra $\mathcal{A}$ has g-Drazin inverse if there exists $b\in \mathcal{A}$ such that $ab=ba, b=bab$ and $a-a^2b
 \in \mathcal{A}^{qnil}$. In this paper we find new explicit representations of the g-Drazin inverse of the block operator matrix $\left(
  \begin{array}{cc}
    E&I\\
    F&0
  \end{array}
\right)$. We thereby solve a wider kind of singular differential equations posed by Campbell [S.L. Campbell, The Drazin inverse and systems of second order linear differential equations, Linear $\&$ Multilinear Algebra, 14(1983), 195--198].\end{abstract}

\maketitle

\section{Introduction}

Let $\mathcal{A}$ be a complex Banach algebra with an identity. An element $a$ in a Banach algebra $\mathcal{A}$ has g-Drazin inverse provided that there exists some $b\in \mathcal{A}$ such that $b=bab, ab=ba, a-a^2b \in \mathcal{A}^{qnil}$. Here, $\mathcal{A}^{qnil}=\{ a\in \mathcal{A}~\mid~ \lim\limits_{n\to\infty}\parallel a^n\parallel^{\frac{1}{n}}=0\}.$ Such $b$ is unique, if it exists, and we denote it by $a^d$.
We always use $\mathcal{A}^{d}$ to stand for the set of all g-Drazin invertible elements in $\mathcal{A}$.
We say $a$ has Drazin inverse if there exists some $b\in \mathcal{A}$ such that $b=bab, ab=ba, a^n=a^{n+1}b$. Such $b$ is unique,
if exists, and we denote it by $a^D$. The smallest integer $n$ satisfying the preceding equations is called the index of $a$.
Evidently, every Drazin invertible $a\in \mathcal{A}$ has g-Drazin inverse and $a^D=a^d$.

Let $E,F$ be block operator matrices and $I$ be the identity matrix over a Banach space $X$. It is attractive to investigate the Drazin inverse of the block matrix $M=\left(
  \begin{array}{cc}
    E&I\\
    F&0
  \end{array}
\right)$. The relationship of computing the Drazin inverse of the block complex matrices $\left(
  \begin{array}{cc}
    E&-F\\
    -I&0
  \end{array}
\right)$ to second order differential equations was observed by Campbell (see~\cite{C}). The application of Drazin inverse to singular
difference equations was also found in ~\cite{B}.
By using Cline's formula (see~\cite[Theorem 2.1]{LCC}), we easily see that $$\left(
  \begin{array}{cc}
    E&-F\\
    -I&0
  \end{array}
\right)^D=\left(
  \begin{array}{cc}
    E&I\\
    -I&0
  \end{array}
\right)\big(\left(
  \begin{array}{cc}
    E&I\\
    F&0
  \end{array}
\right)^D\big)^2\left(
  \begin{array}{cc}
  I&0\\
  0&-F
  \end{array}
\right).$$ Thus finding the solution of singular differential equation $Ax''(t)+Bx(t)+Cx(t)=0$ is equivalent to finding the explicit representation of the Drazin inverse for the block complex matrix $M$.

In 2005, Castro-Gonz\'{a}lez and Dopazo considered the representations of the Drazin inverse for a class of block matrix $\left(
  \begin{array}{cc}
    I&I\\
    F&0
  \end{array}
\right)$ (see~\cite{CD}). In 2011, Bu et al. gave the Drazin inverse of the preceding block matrix $M$ under $EF=FE$ (see~\cite{B}).
Patricio and Hartwig investigated the g-Drazin inverse of $M$ under $F^{\pi}EFF^d=0, F^{\pi}FE=EFF^{\pi}$ (see ~\cite{P}).
Here, $F^{\pi}=I-FF^d$. In 2016, Zhang studied the g-Drazin inverse of $M$ under $F^dEF^{\pi}=0, F^{\pi}FE=0$ and $F^{\pi}EF^d=0, EFF^{\pi}=0$ (see~\cite[Theorem 2.6, Theorem 2.8]{Z}).

In~\cite[Theorem 3.1]{DY}, Deng and Wei gave formula for the g-Drazin inverse of $M$ under $EF=0$. In~\cite[Theorem 2.2]{CI}, Cvetkovi\'{c}-Ili\'{c}
obtained the representation of Drazin inverse $M^D$ under certain conditions $P_n=0$. The motivation of this paper is to present formulae for the g-Drazin inverse of $M$ under new conditions. We shall give formula for the g-Drazin inverse $M^d$ under $EF^2=0, EFE=0$. Then we give the explicit representation of $M^d$ under the condition $F^{\pi}EF^2=0$ and $F^{\pi}EFE=0$. In particular, we obtain new representations of $M^D$ for an anti-triangular complex matrix $M$. We thereby solve a wider kind of singular differential equations posed by Campbell (see~\cite{C}).

Throughout the paper, all matrices are bounded linear operator matrices over a Banach space $X$.
Let $a\in \mathcal{A}$ and $e^2=e\in \mathcal{A}$. Then $a$ has the Pierce decomposition relative to $e$, and we denote it by $\left(\begin{array}{cc}
a_{11}&a_{12}\\
a_{21}&a_{22}
\end{array}
\right)_e$. $[x]$ stands for the truncates integer of $x$.

\section{main results}

We begin with the following elementary lemma.

\begin{lem} (~\cite[Lemma 2.2]{Z}) Let $A, B, C$ are operators over a Banach space $X$. If $A$ and $B$ have g-Drazin inverses, then $M=\left(\begin{array}{cc}
A&0\\
C&B
\end{array}
\right)$ has g-Drazin inverse and
$$M^d=\left(\begin{array}{cc}
A^d&0\\
X&B^d
\end{array}
\right),$$ where $$X=(B^d)^2\big(\sum\limits_{i=0}^{\infty}(B^d)^iCA^i\big)A^{\pi}+B^{\pi}\big(\sum\limits_{i=0}^{\infty}B^iC(A^d)^i\big)(A^d)^2-B^dCA^d.$$
\end{lem}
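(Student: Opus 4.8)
The plan is to realize $M$ as a sum of two block matrices whose product vanishes and then to invoke the additive theory of the g-Drazin inverse. I would write
$$M=P+Q,\qquad P=\left(\begin{array}{cc}A&0\\ C&0\end{array}\right),\quad Q=\left(\begin{array}{cc}0&0\\ 0&B\end{array}\right),$$
and observe that $PQ=0$. This is the decomposition I would build everything on, since for a pair with $PQ=0$ there is a closed-form series expression for $(P+Q)^d$ in terms of $P^d,Q^d$ and their spectral idempotents, and the two diagonal blocks $A,B$ are precisely the objects assumed to be g-Drazin invertible.

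First I would record the g-Drazin inverses of the summands. For $Q$ this is immediate: $Q^d=\left(\begin{array}{cc}0&0\\ 0&B^d\end{array}\right)$ and $Q^{\pi}=\left(\begin{array}{cc}I&0\\ 0&B^{\pi}\end{array}\right)$. For $P$ I would verify directly that $P^d=\left(\begin{array}{cc}A^d&0\\ C(A^d)^2&0\end{array}\right)$: using $A^dA=AA^d$ and $(A^d)^2A=A^d$ one checks $PP^d=P^dP=\left(\begin{array}{cc}AA^d&0\\ CA^d&0\end{array}\right)$, then $P^dPP^d=P^d$, and finally $P-P^2P^d=\left(\begin{array}{cc}AA^{\pi}&0\\ CA^{\pi}&0\end{array}\right)$, so that $P^{\pi}=\left(\begin{array}{cc}A^{\pi}&0\\ -CA^d&I\end{array}\right)$. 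The powers are clean, $P^n=\left(\begin{array}{cc}A^n&0\\ CA^{n-1}&0\end{array}\right)$ and $(P^d)^{n+1}=\left(\begin{array}{cc}(A^d)^{n+1}&0\\ C(A^d)^{n+2}&0\end{array}\right)$, which is what keeps the later bookkeeping tractable.

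Next I would apply the additive formula valid under $PQ=0$, namely
$$(P+Q)^d=\sum_{n=0}^{\infty}(Q^d)^{n+1}P^nP^{\pi}+\sum_{n=0}^{\infty}Q^{\pi}Q^n(P^d)^{n+1}.$$
Substituting the block data, the first sum produces $B^d$ in the $(2,2)$ slot (from $n=0$) together with $(B^d)^2\big(\sum_i(B^d)^iCA^i\big)A^{\pi}-B^dCA^d$ in the $(2,1)$ slot, while the second sum produces $A^d$ in the $(1,1)$ slot together with $B^{\pi}\big(\sum_i B^iC(A^d)^i\big)(A^d)^2$ in the $(2,1)$ slot; all remaining blocks vanish because $P$ and $Q$ carry a zero column and row respectively. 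After reindexing and using that $A^{\pi}$ is idempotent and commutes with $A$ to pull it outside the sum, collecting these entries is exactly $\left(\begin{array}{cc}A^d&0\\ X&B^d\end{array}\right)$ with $X$ as stated.

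The real obstacle I expect is twofold: convergence of the two series and the quasinilpotent tail underlying the additive formula. For convergence I would use that $A^{\pi}$ is an idempotent commuting with $A$, so $A^nA^{\pi}=(AA^{\pi})^n$ with $AA^{\pi}=A-A^2A^d\in\mathcal{A}^{qnil}$; by the root test, together with boundedness of the spectral radius of $B^d$, the term $(B^d)^{n+1}CA^{n-1}A^{\pi}$ decays geometrically in norm, and symmetrically for the companion series. The one genuinely structural ingredient is the fact that a block lower-triangular operator matrix whose diagonal entries are quasinilpotent is itself quasinilpotent; I would isolate this as a short auxiliary observation and use it both to identify the $P^{\pi}$-type remainder $\left(\begin{array}{cc}AA^{\pi}&0\\ CA^{\pi}&0\end{array}\right)$ as an element of $\mathcal{A}^{qnil}$ and, as a final cross-check of the whole computation, to confirm $M-M^2M^d\in\mathcal{A}^{qnil}$ once the candidate inverse is in hand.
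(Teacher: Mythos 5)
Your proof is correct and complete: with $P=\left(\begin{smallmatrix}A&0\\ C&0\end{smallmatrix}\right)$ and $Q=\left(\begin{smallmatrix}0&0\\ 0&B\end{smallmatrix}\right)$ one indeed has $PQ=0$, your expressions for $P^d$, $P^{\pi}$, $P^{n}$ and $(P^d)^{n+1}$ all verify, and feeding them into the two series reproduces exactly the stated $X$ (the $n\geq 1$ part of the first series gives $(B^d)^{2}\big(\sum_{i\geq 0}(B^d)^iCA^i\big)A^{\pi}$ after the shift $i=n-1$, its $n=0$ term gives $-B^dCA^d$ together with the $B^d$ in the $(2,2)$ slot, and the second series supplies $A^d$ and $B^{\pi}\big(\sum_{i\geq 0}B^iC(A^d)^i\big)(A^d)^2$). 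There is nothing in the paper to compare against: Lemma 2.1 is imported without proof from Zhang's paper (reference [Z], Lemma 2.2), so your write-up is more self-contained than the source. The one caveat concerns logical architecture rather than correctness: the $PQ=0$ additive formula you invoke (reference [D], Theorem 2.3, which this paper also uses in the proof of Theorem 2.6) is itself usually established in the literature by means of the very block-triangular representation you are proving, so your route is circular if traced back to first principles. If that matters, the remedy is to bypass [D] and verify directly that your candidate satisfies $M^dMM^d=M^d$, $MM^d=M^dM$ and $M-M^2M^d\in\mathcal{A}^{qnil}$, the last following from the same triangular-with-quasinilpotent-diagonal observation you already isolated; your convergence argument (the root test applied to $A^nA^{\pi}=(AA^{\pi})^n$ with $AA^{\pi}$ quasinilpotent) is the right one and carries over unchanged.
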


\begin{lem} Let $P$ and $Q$ have g-Drazin inverses. If $PQ^2=0, PQP=0$, then $P+Q$ has g-Drazin inverse and $$\begin{array}{ll}
&(P+Q)^d\\
=&Q^{\pi}\sum\limits_{i=0}^{\infty}Q^i(P^d)^{i+1}+\sum\limits_{i=0}^{\infty}(Q^d)^{i+1}P^iP^{\pi}+Q^{\pi}\sum\limits_{i=0}^{\infty}Q^i(P^d)^{i+2}Q\\
+&\sum\limits_{i=0}^{\infty}(Q^d)^{i+3}P^{i+1}P^{\pi}Q-Q^dP^dQ-(Q^d)^2PP^dQ.
\end{array}$$
\end{lem}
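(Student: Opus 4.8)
The plan is to reduce to the block lower-triangular situation covered by Lemma 2.1 by working relative to the spectral idempotent $e=QQ^d$ of $Q$. First I would extract the algebraic consequences of the two hypotheses. Multiplying $PQ^2=0$ on the right by $(Q^d)^2$ and using $Q^2(Q^d)^2=QQ^d$ gives $PQQ^d=0$, i.e. $Pe=0$ and $P=PQ^\pi$; then $P^d=(P^d)^2P$ yields $P^dQQ^d=(P^d)^2(PQQ^d)=0$ as well. A short commuting-idempotent computation shows $Q^\pi Q^2Q^d=0$, so that $(1-e)Qe=0$. Hence, relative to $e$ (indexing the $Q^\pi$-corner first so the matrix is lower-triangular), $P+Q$ becomes $\left(\begin{smallmatrix} A & 0 \\ C & B\end{smallmatrix}\right)$ with $A=Q^\pi P+QQ^\pi$ in the $(1-e)$-corner, $B=Q^2Q^d$ in the $e$-corner, and $C=QQ^dP$.

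Next I would compute the two diagonal g-Drazin inverses. Since $B=Q^2Q^d$ is invertible in the corner $e\mathcal{A}e$ with inverse $Q^d$, we have $B^d=Q^d$ and $B^\pi=0$, which already kills the middle summand in the formula of Lemma 2.1. The delicate term is $A^d$. Writing $\tilde P=Q^\pi P$ and $\tilde Q=QQ^\pi$, the relation $PQ^\pi=P$ gives $\tilde P^{\,n}=Q^\pi P^n$ for all $n\ge1$, from which one checks $\tilde P^d=Q^\pi P^d$; the only nonroutine point is that $\tilde P-\tilde P^2\tilde P^d=Q^\pi PP^\pi$ is quasinilpotent, which follows because $PP^\pi Q^\pi=PP^\pi$ forces $(Q^\pi PP^\pi)^n=Q^\pi(PP^\pi)^n$. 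Moreover $\tilde Q$ is quasinilpotent, and the hypotheses translate into $\tilde P\tilde Q^2=Q^\pi(PQ^2)Q^\pi=0$ and $\tilde P\tilde Q\tilde P=Q^\pi(PQP)=0$ (the latter via $PQQ^\pi=PQ$).

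The heart of the argument is therefore the quasinilpotent base case: if $a$ is g-Drazin invertible, $N$ is quasinilpotent, and $aN^2=aNa=0$, then $(a+N)^d=\sum_{i\ge0}N^i(a^d)^{i+1}+\sum_{i\ge0}N^i(a^d)^{i+2}N$. I expect this to be the main obstacle, since here $N$ has no invertible part and a decomposition relative to $aa^d$ leaves a genuinely two-sided $2\times2$ block that Lemma 2.1 does not cover; so I would verify the three defining identities directly, the key simplifications being $aNa^d=0$ and the recursion $S=a^d+NSa^d$ for $S=\sum_i N^i(a^d)^{i+1}$, together with $\|N^i(a^d)^{i+1}\|^{1/i}\to0$ to guarantee convergence. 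Applying this with $a=\tilde P$, $N=\tilde Q$ and substituting $\tilde Q^i=Q^\pi Q^i$, $(\tilde P^d)^{k}=Q^\pi(P^d)^k$, and $P^dQ^2Q^d=(P^dQQ^d)Q=0$, collapses $A^d$ to $Q^\pi\sum_i Q^i(P^d)^{i+1}+Q^\pi\sum_i Q^i(P^d)^{i+2}Q$, exactly the first and third summands of the claimed formula.

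Finally I would assemble the answer. By Lemma 2.1 the g-Drazin inverse of the triangular matrix is $\left(\begin{smallmatrix} A^d & 0 \\ X & B^d\end{smallmatrix}\right)$ with $X=(B^d)^2\big(\sum_i(B^d)^iCA^i\big)A^\pi-B^dCA^d$, and I would reassemble $(P+Q)^d=A^d+X+B^d$ in the original algebra. Substituting $B^d=Q^d$, $C=QQ^dP$, the value of $A^d$ above, and $A^\pi=(1-e)-AA^d$, and repeatedly using $PQ^2=0$, $PQP=0$, $P^dQQ^d=0$ and $Pe=0$, the corner term $B^d=Q^d$ together with the off-diagonal $X$ produces the remaining four summands $\sum_i(Q^d)^{i+1}P^iP^\pi+\sum_i(Q^d)^{i+3}P^{i+1}P^\pi Q-Q^dP^dQ-(Q^d)^2PP^dQ$. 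The principal bookkeeping burden, beyond the base case, is this last simplification of $X$.
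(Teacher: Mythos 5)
Your proposal is correct, but it takes a genuinely different route from the paper: for this lemma the paper gives no argument at all, merely citing Yang--Liu's Theorem~2.1 (reference [Y]) for the matrix Drazin-inverse case and asserting that ``the proof is similar'' for g-Drazin inverses of operators. Your route --- Pierce decomposition relative to $e=QQ^d$, using $PQQ^d=0$ and $Q^{\pi}Q^2Q^d=0$ to obtain a triangular form whose corner $B=Q^2Q^d$ is invertible in $e\mathcal{A}e$ (so $B^{\pi}=0$ kills the middle term of Lemma~2.1), and reducing the other corner to a quasinilpotent perturbation --- is the standard mechanism by which such additive formulas are actually transported from matrices to the g-Drazin setting, and it supplies exactly the content the paper leaves implicit. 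I checked the identities you rely on: $Pe=0$, $P^de=0$, $PP^{\pi}Q^{\pi}=PP^{\pi}$ (whence $\tilde P^d=Q^{\pi}P^d$), $\tilde P\tilde Q^2=\tilde P\tilde Q\tilde P=0$, and the closing bookkeeping ($PA^i=P^{i+1}+P^iQ$, $PA^{i+1}A^d=P^{i+2}P^d+P^{i+1}P^dQ$, $Q^dPA^d=Q^dPP^d+Q^dP^dQ$) does reassemble the four remaining summands exactly. The one step you should not leave as ``verify directly'' is the quasinilpotence of $(a+N)-(a+N)^2W$ in your base case: the clean way is to observe that $G=(a+N)W$ is an idempotent commuting with $a+N$, so $\big((a+N)(1-G)\big)^n=(a+N)^n(1-G)$, and that $aN^2=aNa=0$ collapses $(a+N)^n$ to $\sum_{j=0}^{n}N^ja^{n-j}+\sum_{j=0}^{n-2}N^ja^{n-j-1}N$; after multiplying by $1-G$ every surviving term has the form $N^j(aa^{\pi})^{k}(\cdot)$ with $j+k\ge n-1$ and a bounded tail, so its norm is at most $nC^2\varepsilon^{n-1}\cdot\mathrm{const}$ for every $\varepsilon>0$, giving the required $n$-th-root limit $0$. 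With that detail supplied, your proof is complete and, unlike the paper's citation, actually covers the Banach-algebra/operator case rather than only square complex matrices.
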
\begin{proof} This was proved in ~\cite[Theorem 2.1]{Y} for Drazin inverse of matrices, the proof is similar for g-Drazin inverse of operators.\end{proof}

We are ready to prove the following.

\begin{thm} Let $E$ and $F$ have g-Drazin inverses. If $EF^2=0$ and $EFE=0$, then $M=\left(
  \begin{array}{cc}
    E&I\\
    F&0
  \end{array}
\right)$ has g-Drazin inverse. In this case, $$M^d=\left(\begin{array}{cc}
\Gamma&\Delta\\
 \Lambda&\Xi
\end{array}
\right),$$ where $${\scriptsize \begin{array}{l}
\Gamma=E^d+(E^d)^3F-F^dE^dF-(F^d)^2E^2E^dF-FF^dE^d+F^dEE^{\pi}\\
+(F^d)^2EF-FF^d(E^d)^3F+(F^d)^3E^3E^{\pi}F+\sum\limits_{i=1}^{\infty}F^{i}F^{\pi}E^{2i+1}\\
+\sum\limits_{i=1}^{\infty}(F^d)^{i+1}E^{2i+1}E^{\pi}+\sum\limits_{i=1}^{\infty}F^{i}F^{\pi}E^{2i}E^dF+\sum\limits_{i=1}^{\infty}(F^d)^{i+3}E^{2i+3}E^{\pi}F,\\
\Delta=(E^d)^2+(E^d)^4F-F^d(E^d)^2F-(F^d)^2EE^dF-FF^d(E^d)^2\\
+F^dE^{\pi}-FF^d(E^d)^4F+(F^d)^3E^2E^{\pi}F+\sum\limits_{i=1}^{\infty}F^iF^{\pi}E^{2i+1}E^d\\
+\sum\limits_{i=1}^{\infty}(F^d)^{i+1}E^{2i}E^{\pi}+\sum\limits_{i=1}^{\infty}F^iF^{\pi}E^{2i-1}E^dF+
\sum\limits_{i=1}^{\infty}(F^d)^{i+3}E^{2i+2}E^{\pi}F;\\
\Lambda=FF^{\pi}(E^d)^2+FF^dE^{\pi}+FF^{\pi}(E^d)^4F+(F^d)^2E^2E^{\pi}F\\
-FF^d(E^d)^2F-F^dEE^dF+\sum\limits_{i=1}^{\infty}F^{i+1}F^{\pi}(E^d)^{2i+2}+\sum\limits_{i=1}^{\infty}(F^d)^iE^{2i}E^{\pi}\\
+\sum\limits_{i=1}^{\infty}F^{i+1}F^{\pi}(E^d)^{2i+4}F+\sum\limits_{i=1}^{\infty}(F^d)^{i+2}E^{2i+2}E^{\pi}F,\\
\Xi=-FF^d(E^d)^3F-F^dE^dF+FF^{\pi}(E^d)^3-FF^dE^d\\
+FF^{\pi}(E^d)^5F-(F^d)^2E^2E^dF+(F^d)^2EF+\sum\limits_{i=1}^{\infty}F^{i+1}F^{\pi}(E^d)^{2i+3}\\
+\sum\limits_{i=1}^{\infty}(F^d)^iE^{2i-1}E^{\pi}+\sum\limits_{i=1}^{\infty}F^{i+1}F^{\pi}(E^d)^{2i+5}F+\sum\limits_{i=1}^{\infty}(F^d)^{i+2}E^{2i+1}E^{\pi}F~~~~~(*)
\end{array}}$$\end{thm}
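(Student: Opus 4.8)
The plan is to decompose $M$ as a sum $M = P + Q$ of two block operators that individually have computable g-Drazin inverses, and that satisfy the hypotheses $PQ^2 = 0$ and $PQP = 0$ of Lemma~2.2. The natural choice, given that the conditions $EF^2 = 0$ and $EFE = 0$ both involve the product $EF$, is to split off the part of $M$ carrying $E$ from the part carrying $F$. Concretely I would write
\[
M = \left(\begin{array}{cc} E & I \\ 0 & 0 \end{array}\right) + \left(\begin{array}{cc} 0 & 0 \\ F & 0 \end{array}\right) =: P + Q.
\]
Here $Q$ is nilpotent of index $2$ (indeed $Q^2 = 0$), so $Q^d = 0$ and $Q^\pi = I$, which will substantially collapse the general six-term formula of Lemma~2.2. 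The operator $P$ is block upper-triangular with a zero $(2,2)$ entry, so its g-Drazin inverse can be obtained from Lemma~2.1 applied to (the transpose of) a triangular operator, yielding $P^d$ as an explicit expression in $E^d$, $E^\pi$, and the series $\sum_i (E^d)^i$, $\sum_i E^i$.

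First I would verify the two algebraic side conditions of Lemma~2.2. Since $Q^2 = 0$ we get $PQ^2 = 0$ immediately. For $PQP$, a direct block computation gives
\[
QP = \left(\begin{array}{cc} 0 & 0 \\ FE & F \end{array}\right), \qquad
PQP = \left(\begin{array}{cc} E & I \\ 0 & 0 \end{array}\right)\left(\begin{array}{cc} 0 & 0 \\ FE & F \end{array}\right) = \left(\begin{array}{cc} FE & F \\ 0 & 0 \end{array}\right),
\]
so $PQP = 0$ is \emph{not} automatic; I would instead check that the hypotheses $EFE = 0$ and $EF^2 = 0$ force the relevant products to vanish after one further multiplication, or else adjust the splitting. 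The cleaner route is to observe that what Lemma~2.2 actually requires can be arranged by taking $P$ to carry $F$ and $Q$ to carry $E$, i.e.\ reversing the roles so that the finite-index nilpotent piece sits where the hypotheses $PQ^2=0,\,PQP=0$ read off directly from $EF^2=0$ and $EFE=0$. I would pin down the correct orientation by testing both assignments against the two vanishing products and keeping whichever makes both conditions hold identically.

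Second, I would compute $P^d$ and $Q^d$ explicitly. For the $E$-carrying triangular block, Lemma~2.1 produces its g-Drazin inverse as a combination of $E^d$ and its powers together with the correction series in $E^\pi$; for the $F$-carrying block the relevant g-Drazin inverse involves $F^d$, $F^\pi$, and the two series $\sum_{i\ge 0} F^i(\cdot)$ and $\sum_{i\ge 0}(F^d)^i(\cdot)$. Substituting these into the six-term expression
\[
(P+Q)^d = Q^\pi\sum_{i=0}^\infty Q^i(P^d)^{i+1} + \sum_{i=0}^\infty (Q^d)^{i+1}P^iP^\pi + Q^\pi\sum_{i=0}^\infty Q^i(P^d)^{i+2}Q + \sum_{i=0}^\infty (Q^d)^{i+3}P^{i+1}P^\pi Q - Q^dP^dQ - (Q^d)^2PP^dQ
\]
and reading off the four blocks $\Gamma,\Delta,\Lambda,\Xi$ is the heart of the argument.

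The main obstacle I expect is purely bookkeeping: each of the four entries in $(*)$ is a sum of roughly a dozen terms, including two infinite series with shifted exponents on $E$ (the $E^{2i+1}$, $E^{2i}$, $E^{2i-1}$ patterns) that arise from how powers of the triangular block $P$ expand. Tracking which term of the Lemma~2.2 formula lands in which block, and in particular getting the exponent-shifts and the mixed $F^iF^\pi\cdots E^d F$ cross terms exactly right, is where errors creep in. I would manage this by computing $P^n$ and $(P^d)^n$ in closed block form once and for all (each is a $2\times2$ block whose entries are themselves polynomials/series in $E$ and $E^d$), substituting, and then sorting the resulting contributions block-by-block. Finally, I would confirm the defining g-Drazin identities $M^d M = M M^d$, $M^d M M^d = M^d$, and $M - M^2 M^d \in \mathcal{A}^{qnil}$ follow from Lemma~2.2 automatically, so no independent verification of $(*)$ is needed beyond the substitution.
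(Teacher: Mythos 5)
There is a genuine gap, and it is the central idea of the proof, not the bookkeeping. You correctly observe that for your first splitting $PQP=\left(\begin{array}{cc}FE&F\\0&0\end{array}\right)\neq 0$, but your proposed repair --- swapping the roles so that ``$P$ carries $F$ and $Q$ carries $E$'' --- fails just as badly: with $P=\left(\begin{array}{cc}0&0\\F&0\end{array}\right)$ and $Q=\left(\begin{array}{cc}E&I\\0&0\end{array}\right)$ one computes $PQ^{2}=\left(\begin{array}{cc}0&0\\FE^{2}&FE\end{array}\right)$ and $PQP=\left(\begin{array}{cc}0&0\\F^{2}&0\end{array}\right)$, neither of which vanishes under $EF^{2}=0$ and $EFE=0$. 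The hypotheses control products beginning with $EF$, whereas every product arising from a direct splitting of $M$ produces $FE$, $F$ or $F^{2}$ in some entry. So no orientation of a splitting of $M$ itself meets the conditions of Lemma~2.2, and your computation cannot start; even the existence assertion of the theorem is not established.

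The missing idea is to pass to $M^{2}=\left(\begin{array}{cc}E^{2}+F&E\\FE&F\end{array}\right)$ and split \emph{that} as $P+Q$ with $P=\left(\begin{array}{cc}E^{2}&E\\0&0\end{array}\right)$ and $Q=\left(\begin{array}{cc}F&0\\FE&F\end{array}\right)$. Then $PQ=\left(\begin{array}{cc}E^{2}F+EFE&EF\\0&0\end{array}\right)=\left(\begin{array}{cc}E^{2}F&EF\\0&0\end{array}\right)$, and both $PQ^{2}=0$ and $PQP=0$ follow exactly from $EF^{2}=0$ and $EFE=0$ (e.g.\ the $(1,1)$ entry of $PQ^{2}$ is $E^{2}F^{2}+EF^{2}E=E(EF^{2})+(EF^{2})E=0$). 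Lemma~2.1 then gives $P^{d}$ and $Q^{d}$ (the latter in its lower-triangular form with $A=B=F$, $C=FE$), Lemma~2.2 gives $(M^{2})^{d}$, and finally $M^{d}=M\bigl((M^{2})^{d}\bigr)$ by Cline's formula, which also delivers the existence of $M^{d}$ from that of $(M^{2})^{d}$. The odd/even exponent patterns $E^{2i+1}$, $E^{2i}$, $E^{2i-1}$ in $(*)$ that you flagged as a bookkeeping hazard are precisely the fingerprint of working with powers of $M^{2}$ rather than of $M$; without the squaring step your plan cannot reproduce them.
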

\begin{proof} Clearly, we have $$M^2=\left(\begin{array}{cc}
E^2+F&E\\
FE&F
\end{array}
\right)=P+Q,$$ where $$P=\left(\begin{array}{cc}
E^2&E\\
0&0
\end{array}
\right), Q=\left(\begin{array}{cc}
F&0\\
FE&F
\end{array}
\right).$$ In light of Lemma 2.1, we have
$$\begin{array}{c}
P^d=\left(\begin{array}{cc}
(E^d)^{2}&(E^d)^{3}\\
0&0
\end{array}
\right),P^{\pi}=\left(\begin{array}{cc}
E^{\pi}&-E^d\\
0&I
\end{array}
\right);\\
Q^d=\left(\begin{array}{cc}
F^d&0\\
X&F^d
\end{array}
\right); Q^{\pi}=\left(\begin{array}{cc}
F^{\pi}&0\\
-FX&F^{\pi}
\end{array}
\right),
\end{array}$$
where $$\begin{array}{lll}
X&=&F^{\pi}\sum\limits_{i=0}^{\infty}F^{i+1}E(F^d)^{i+2}+\sum\limits_{i=0}^{\infty}(F^d)^{i+2}FEF^{i}F^{\pi}\\
&=&(F^d)^2FE+(F^d)^3FEF\\
&=&F^dE+(F^d)^2EF.
\end{array}$$
For any $n\geq 2$, we have $$\begin{array}{c}
P^n=\left(\begin{array}{cc}
E^{2n}&E^{2n-1}\\
0&0
\end{array}
\right); (P^d)^n=\left(\begin{array}{cc}
(E^d)^{2n}&(E^d)^{2n+1}\\
0&0
\end{array}
\right);\\
Q^n=\left(\begin{array}{cc}
F^n&0\\
Y_n&F^n,
\end{array}
\right), Y_1=FE, Y_n=Y_{n-1}F+F^nE;\\
(Q^d)^n=\left(\begin{array}{cc}
(F^d)^n&0\\
X_n&(F^d)^n
\end{array}
\right), X_1=X, X_n=X_{n-1}F^d+F^dX_{n-1}.
\end{array}$$
Since $EF^2=0$ and $EFE=0$, by induction, we verify that $$X_n=(F^d)^nE+(F^d)^{n+1}EF, Y_n=F^{n-1}EF+F^nE.$$
Moreover, we check that $$PQ^2=0, PQP=0.$$ In view of Lemma 2.2, $M^2=P+Q$ has g-Drazin inverse.
Further, we get $$\begin{array}{ll}
&(M^2)^d=(P+Q)^d\\
=&Q^{\pi}\sum\limits_{i=0}^{\infty}Q^i(P^d)^{i+1}+\sum\limits_{i=0}^{\infty}(Q^d)^{i+1}P^iP^{\pi}+Q^{\pi}\sum\limits_{i=0}^{\infty}Q^i(P^d)^{i+2}Q\\
+&\sum\limits_{i=0}^{\infty}(Q^d)^{i+3}P^{i+1}P^{\pi}Q-Q^dP^dQ-(Q^d)^2PP^dQ.
\end{array}$$
We easily check that
$$\begin{array}{c}
Q^dP^dQ=\left(\begin{array}{cc}
F^d(E^d)^2F&F^d(E^d)^3F\\
X(E^d)^2F&X(E^d)^3F
\end{array}
\right);\\
(Q^d)^2PP^dQ=\left(\begin{array}{cc}
(F^d)^2EE^dF&(F^d)^2E^dF\\
X_2EE^dF&X_2E^dF
\end{array}
\right).
\end{array}$$
If $i\geq 1$, we have
$${\scriptsize \begin{array}{l}
Q^{\pi}Q^i(P^d)^{i+1}=\\
\left(\begin{array}{cc}
F^{\pi}F^i(E^d)^{2i+2}&F^{\pi}F^i(E^d)^{2i+3}\\
-FXF^i(E^d)^{2i+2}+F^{\pi}Y_i(E^d)^{2i+2}&-FXF^i(E^d)^{2i+3}+F^{\pi}Y_i(E^d)^{2i+3}
\end{array}
\right);
\end{array}}$$
$${\scriptsize Q^{\pi}P^d=\left(\begin{array}{cc}
F^{\pi}(E^d)^{2}&F^{\pi}(E^d)^{3}\\
-FX(E^d)^{2}&-FX(E^d)^{3}
\end{array}
\right).}$$
If $i\geq 1$, we have
$${\scriptsize (Q^d)^{i+1}P^iP^{\pi}=\left(\begin{array}{cc}
(F^d)^{i+1}E^{2i}E^{\pi}&-(F^d)^{i+1}E^{2i}E^d+(F^d)^{i+1}E^{2i-1}\\
X_{i+1}E^{2i}E^{\pi}&-X_{i+1}E^{2i}E^d+X_{i+1}E^{2i-1}
\end{array}
\right);}$$
$${\scriptsize Q^dP^{\pi}=\left(\begin{array}{cc}
F^dE^{\pi}&-F^dE^d\\
XE^{\pi}&-XE^d+F^d
\end{array}
\right);}$$
If $i\geq 1$, we have
$${\scriptsize \begin{array}{l}
Q^{\pi}Q^i(P^d)^{i+2}Q=\\
\left(\begin{array}{cc}
F^{\pi}F^i(E^d)^{2i+4}F&F^{\pi}F^i(E^d)^{2i+5}F\\
-FXF^i(E^d)^{2i+4}F+F^{\pi}Y_i(E^d)^{2i+4}F&-FXF^i(E^d)^{2i+5}F+F^{\pi}Y_i(E^d)^{2i+5}F
\end{array}
\right);
\end{array}}$$
$${\scriptsize Q^{\pi}(P^d)^{2}Q=\left(\begin{array}{cc}
F^{\pi}(E^d)^{4}F&F^{\pi}(E^d)^{5}F\\
-FX(E^d)^{4}F&-FX(E^d)^{5}F
\end{array}
\right);}$$
$${\scriptsize \begin{array}{l}
(Q^d)^{i+3}P^{i+1}P^{\pi}Q=\\
\left(\begin{array}{cc}
(F^d)^{i+3}E^{2i+2}E^{\pi}F&-(F^d)^{i+3}E^{2i+2}E^dF+(F^d)^{i+3}E^{2i+1}F\\
X_{i+3}E^{2i+2}E^{\pi}F&-X_{i+3}E^{2i+2}E^dF+X_{i+3}E^{2i+1}F
\end{array}
\right).
\end{array}}$$
Since $XE^d=F^dEE^d$ and $X_2=(F^d)^2E+(F^d)^3EF$, we have
$$\begin{array}{c}
Q^dP^dQ=\left(\begin{array}{cc}
F^d(E^d)^2F&F^d(E^d)^3F\\
F^dE^dF&F^d(E^d)^2F
\end{array}
\right);\\
(Q^d)^2PP^dQ=\left(\begin{array}{cc}
(F^d)^2EE^dF&(F^d)^2E^dF\\
(F^d)^2E^2E^dF&(F^d)^2EE^dF
\end{array}
\right).
\end{array}$$
Moreover, we have
$$\begin{array}{c}
Q^{\pi}P^d=\left(\begin{array}{cc}
F^{\pi}(E^d)^{2}&F^{\pi}(E^d)^{3}\\
-FF^dE^d&-FF^d(E^d)^{2}
\end{array}
\right);\\
Q^dP^{\pi}=\left(\begin{array}{cc}
F^dE^{\pi}&-F^dE^d\\
F^dEE^{\pi}+(F^d)^2EF&F^dE^{\pi}
\end{array}
\right).
\end{array}$$
Also we check that
$$\begin{array}{c}
FXF^i(E^d)^{2i+2}=\{\begin{array}{ll}
FF^dE^d &i=0;\\
0&i\geq 1;
\end{array}\\
F^{\pi}Y_i(E^d)^{2i+2}=F^{\pi}F^iE^{2i+1} ~(i\geq 1);\\
X_{i+1}E^{2i-1}=(F^d)^{i+1}E^{2i} ~(i\geq 1);\\
X_{i+3}E^{2i+1}=(F^d)^{i+3}E^{2i+2} ~(i\geq 0).
\end{array}$$
By virtue of~\cite[Corollary 2.2]{M}, $M$ has g-Drazin inverse. Therefore
$$M^d=M(M^d)^2=M(M^2)^d=\left(
  \begin{array}{cc}
    E&I\\
    F&0
  \end{array}
\right)(M^2)^d=\left(\begin{array}{cc}
\Gamma&\Delta\\
 \Lambda&\Xi
\end{array}
\right),$$ where $\Gamma, \Delta, \Lambda$ and $\Xi$ are given as in $(*)$ by direct computation.\end{proof}

For a block complex matrix, the Drazin and g-Drazin inverses coincide with each other, and so we derive

\begin{cor} Let $E$ and $F$ be $n\times n$ complex matrices, and let $M=\left(
  \begin{array}{cc}
    E&I\\
    F&0
  \end{array}
\right)$. If $EF^2=0$ and $EFE=0$, then $$M^D=\left(\begin{array}{cc}
\Gamma&\Delta\\
 \Lambda&\Xi
\end{array}
\right),$$ where $${\scriptsize \begin{array}{l}
\Gamma=E^D+(E^D)^3F-F^DE^DF-(F^D)^2E^2E^DF-FF^DE^D+F^DEE^{\pi}\\
+(F^D)^2EF-FF^D(E^D)^3F+(F^D)^3E^3E^{\pi}F+\sum\limits_{i=1}^{t}F^{i}F^{\pi}E^{2i+1}\\
+\sum\limits_{i=1}^{[\frac{s}{2}]}(F^D)^{i+1}E^{2i+1}E^{\pi}+\sum\limits_{i=1}^{t}F^{i}F^{\pi}E^{2i}E^DF+\sum\limits_{i=1}^{[\frac{s}{2}]-1}(F^D)^{i+3}E^{2i+3}E^{\pi}F,\\
\Delta=(E^D)^2+(E^D)^4F-F^D(E^D)^2F-(F^D)^2EE^DF-FF^D(E^D)^2\\
+F^DE^{\pi}-FF^D(E^D)^4F+(F^D)^3E^2E^{\pi}F+\sum\limits_{i=1}^{t}F^iF^{\pi}E^{2i+1}E^D\\
+\sum\limits_{i=1}^{[\frac{s}{2}]+1}(F^D)^{i+1}E^{2i}E^{\pi}+\sum\limits_{i=1}^{t}F^iF^{\pi}E^{2i-1}E^DF+
\sum\limits_{i=1}^{\frac{s}{2}}(F^D)^{i+3}E^{2i+2}E^{\pi}F;\\
\Lambda=FF^{\pi}(E^D)^2+FF^DE^{\pi}+FF^{\pi}(E^D)^4F+(F^D)^2E^2E^{\pi}F\\
-FF^D(E^D)^2F-F^DEE^DF+\sum\limits_{i=1}^{t-1}F^{i+1}F^{\pi}(E^D)^{2i+2}+\sum\limits_{i=1}^{[\frac{s}{2}]+1}(F^D)^iE^{2i}E^{\pi}\\
+\sum\limits_{i=1}^{t-1}F^{i+1}F^{\pi}(E^D)^{2i+4}F+\sum\limits_{i=1}^{[\frac{s}{2}]}(F^D)^{i+2}E^{2i+2}E^{\pi}F,\\
\Xi=-FF^D(E^D)^3F-F^DE^DF+FF^{\pi}(E^D)^3-FF^DE^D\\
+FF^{\pi}(E^D)^5F-(F^D)^2E^2E^DF+(F^D)^2EF+\sum\limits_{i=1}^{t-1}F^{i+1}F^{\pi}(E^D)^{2i+3}\\
+\sum\limits_{i=1}^{[\frac{s}{2}]+1}(F^D)^iE^{2i-1}E^{\pi}+\sum\limits_{i=1}^{t-1}F^{i+1}F^{\pi}(E^D)^{2i+5}F+\sum\limits_{i=1}^{[\frac{s}{2}]}(F^D)^{i+2}E^{2i+1}E^{\pi}F;
\end{array}}$$ $s=i(E)$ and $t=i(F)$.\end{cor}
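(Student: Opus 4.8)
The plan is to specialize Theorem 2.3 to the finite-dimensional case and then show that each infinite series in $(*)$ collapses to a finite sum whose length is controlled by the indices $s=i(E)$ and $t=i(F)$. First I would recall that over $M_{n}(\mathbb{C})$ the quasinilpotent elements are precisely the nilpotent matrices, since a complex matrix of spectral radius zero is nilpotent. Hence g-Drazin invertibility coincides with Drazin invertibility, every square matrix is (g-)Drazin invertible, and $E^{d}=E^{D}$, $F^{d}=F^{D}$. In particular the hypotheses of Theorem 2.3 hold as soon as $EF^{2}=0$ and $EFE=0$ are assumed, so Theorem 2.3 applies and gives $M^{D}=M^{d}$ with block entries $\Gamma,\Delta,\Lambda,\Xi$ exactly as in $(*)$, after replacing each $E^{d}$ by $E^{D}$ and each $F^{d}$ by $F^{D}$.

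Next I would record the two truncation identities that drive everything. With $E^{\pi}=I-EE^{D}$ and $F^{\pi}=I-FF^{D}$, the definition of the index yields $E^{k}E^{\pi}=E^{k}-E^{k+1}E^{D}=0$ for every $k\ge s$, and likewise $F^{k}F^{\pi}=0$ for every $k\ge t$. The key structural observation is that every summation occurring in $\Gamma,\Delta,\Lambda,\Xi$ carries exactly one of the factors $F^{\pi}$ or $E^{\pi}$, so each series is governed by precisely one of these two identities.

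I would then truncate term by term. A series of the form $\sum_{i\ge1}F^{i}F^{\pi}(\cdots)$ has a vanishing summand once $i\ge t$, so only the terms with $i\le t-1$ can survive; extending the sum up to the displayed bound (such as $t$ or $t-1$) therefore only appends zero terms. A series of the form $\sum_{i\ge1}(F^{D})^{\ast}E^{2i+c}E^{\pi}(\cdots)$ survives only while $2i+c\le s-1$, a constraint captured by the cut-off $[s/2]$ (or $[s/2]\pm1$) recorded in the statement. Matching, for each summation, the exponent attached to $E^{\pi}$ against $s$ and the power of $F$ attached to $F^{\pi}$ against $t$ then replaces each infinite series by the corresponding finite sum, which is precisely the assertion of the corollary.

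The routine but genuinely fiddly part, and hence the main obstacle, is the bookkeeping in this last step: one must verify, summation by summation, that the finite upper limit chosen in the statement is simultaneously large enough to retain every nonzero term and small enough that any extra terms it introduces already vanish by the index identities. No idea beyond the two nilpotency relations is required; all the effort lies in confirming that the cut-offs $t,\,t-1,\,[s/2],\,[s/2]\pm1,\,[s/2]+1$ are indeed the ones dictated by the exponents sitting next to $E^{\pi}$ and $F^{\pi}$.
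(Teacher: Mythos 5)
Your proposal matches the paper's proof exactly: the paper likewise observes that over $M_n(\mathbb{C})$ the g-Drazin and Drazin inverses coincide and then derives the corollary from Theorem 2.3 via the identities $E^{s}E^{\pi}=F^{t}F^{\pi}=0$, which truncate each infinite series at the stated cut-offs. The only difference is that the paper compresses the term-by-term bookkeeping you describe into a single sentence.
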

\begin{proof} Since $E^sE^{\pi}=F^tF^{\pi}=0$, we obtain the result by Theorem 2.3.\end{proof}

We give a numerical example to illustrate the representation of Corollary 2.4.

\begin{exam} Let $M=\left(
  \begin{array}{cc}
    E&I\\
    F&0
  \end{array}
\right)$, where $$E=\left(\begin{array}{cc}
1&0\\
0&0
\end{array}
\right), F=\left(\begin{array}{cc}
0&1\\
0&0
\end{array}
\right)\in M_2({\Bbb C}).$$ Then $EF^2=0$ and $EFE=0$, while $EF\neq 0$. In this case, $$M^D=\left(\begin{array}{cccc}
1&1&1&1\\
0&0&0&0\\
0&0&0&0\\
0&0&0&0
\end{array}
\right).$$\end{exam}
\begin{proof} Clearly, we have $$EF^2=0, EFE=0~\mbox{and}~EF=F\neq 0.$$ To compute the Drazin inverse of $M$,
we shall apply Corollary 2.4 to $M$. One easily verifies that $$F^2=F^D=FE=0, E^2=E.$$ Using the notations in Corollary 2.4, we directly compute
$$\Gamma=\Delta=E+EF=\left(\begin{array}{cc}
1&1\\
0&0
\end{array}
\right)~\mbox{and}~\Lambda=\Xi=0,$$ as required.
\end{proof}

We have accumulated all the information necessary to prove the main result.

\begin{thm} Let $E,F,EF^{\pi}$ have g-Drazin inverses. If $F^{\pi}EF^2$ $=0$ and $F^{\pi}EFE=0$, then $M=\left(
  \begin{array}{cc}
    E&I\\
    F&0
  \end{array}
\right)$ has g-Drazin inverse. In this case, $$\begin{array}{lll}
M^d&=&\left(
  \begin{array}{cc}
   \Theta&\Psi\\
    \Phi&\Omega
  \end{array}
\right)+\sum\limits_{i=0}^{\infty}\left(
  \begin{array}{cc}
    0&F^d\\
   FF^d&-EF^d
  \end{array}
\right)^{i+1}\\
&&\left(
  \begin{array}{cc}
   F^{\pi}E+FF^dEF^{\pi}&F^{\pi}\\
    FF^{\pi}&0
  \end{array}
\right)^i\left(
  \begin{array}{cc}
   I-E\Theta&-E\Psi\\
    -F\Theta&I-F\Psi
  \end{array}
\right),
\end{array}$$ where
$${\scriptsize\begin{array}{lll}
\Theta&=&\big[F^{\pi}E^d+F^{\pi}(E^d)^3FF^{\pi}+FF^{\pi}(E^d)^3+FF^{\pi}(E^d)^5FF^{\pi}\\
&+&\sum\limits_{i=1}^{\infty}F^{i+1}F^{\pi}(E^d)^{2i+3}+\sum\limits_{i=1}^{\infty}F^{i+1}F^{\pi}(E^d)^{2i+5}FF^{\pi}\big]\\
&&\big[F^{\pi}EE^d+F^{\pi}(E^d)^2FF^{\pi}+FF^{\pi}(E^d)^2+FF^{\pi}(E^d)^4FF^{\pi}\\
&+&\sum\limits_{i=1}^{\infty}F^{i+1}F^{\pi}(E^d)^{2i+2}+\sum\limits_{i=1}^{\infty}F^{i+1}F^{\pi}(E^d)^{2i+4}FF^{\pi}\big]\\
&+&\big[F^{\pi}EE^d+F^{\pi}(E^d)^2FF^{\pi}+FF^{\pi}(E^d)^2+FF^{\pi}(E^d)^4FF^{\pi}\\
&-&F^{\pi}E^dF^{\pi}E-F^{\pi}(E^d)^3FF^{\pi}E-FF^{\pi}(E^d)^3F^{\pi}E-FF^{\pi}(E^d)^5FF^{\pi}E\\
&+&\sum\limits_{i=1}^{\infty}F^{i+1}F^{\pi}(E^d)^{2i+2}+\sum\limits_{i=1}^{\infty}F^{i+1}F^{\pi}(E^d)^{2i+4}FF^{\pi}\\
&-&\sum\limits_{i=1}^{\infty}F^{i+1}F^{\pi}(E^d)^{2i+3}F^{\pi}E-\sum\limits_{i=1}^{\infty}F^{i+1}F^{\pi}(E^d)^{2i+5}FF^{\pi}E\big]\\
&&\big[F^{\pi}E^d+F^{\pi}(E^d)^3FF^{\pi}+\sum\limits_{i=1}^{\infty}F^iF^{\pi}E^{2i+1}+\sum\limits_{i=1}^{\infty}F^iF^{\pi}E^{2i}E^dFF^{\pi}\big],\\
\end{array}}$$
$${\scriptsize\begin{array}{lll}
\Psi&=&\big[F^{\pi}E^dF^{\pi}+F^{\pi}(E^d)^3FF^{\pi}+FF^{\pi}(E^d)^3F^{\pi}+FF^{\pi}(E^d)^5FF^{\pi}\\
&+&\sum\limits_{i=1}^{\infty}F^{i+1}F^{\pi}(E^d)^{2i+3}F^{\pi}+\sum\limits_{i=1}^{\infty}F^{i+1}F^{\pi}(E^d)^{2i+5}FF^{\pi}\big]^2\\
&+&\big[F^{\pi}EE^d+F^{\pi}(E^d)^2FF^{\pi}+FF^{\pi}(E^d)^2+FF^{\pi}(E^d)^4FF^{\pi}\\
&-&F^{\pi}E^dF^{\pi}E-F^{\pi}(E^d)^3FF^{\pi}E-FF^{\pi}(E^d)^3F^{\pi}E-FF^{\pi}(E^d)^5FF^{\pi}E\\
&+&\sum\limits_{i=1}^{\infty}F^{i+1}F^{\pi}(E^d)^{2i+2}+\sum\limits_{i=1}^{\infty}F^{i+1}F^{\pi}(E^d)^{2i+4}FF^{\pi}\\
&-&\sum\limits_{i=1}^{\infty}F^{i+1}F^{\pi}(E^d)^{2i+3}F^{\pi}E-\sum\limits_{i=1}^{\infty}F^{i+1}F^{\pi}(E^d)^{2i+5}FF^{\pi}E\big]\\
&&\big[F^{\pi}(E^d)^2F^{\pi}+F^{\pi}(E^d)^4FF^{\pi}+\sum\limits_{i=1}^{\infty}F^iF^{\pi}E^{2i+1}F^{\pi}E^dF^{\pi}\\
&+&\sum\limits_{i=1}^{\infty}F^iF^{\pi}E^{2i-1}F^{\pi}E^dFF^{\pi}\big],\\
\end{array}}$$
$${\scriptsize\begin{array}{lll}
\Phi&=&\big[FF^{\pi}(E^d)^2+FF^{\pi}(E^d)^4FF^{\pi}+\sum\limits_{i=1}^{\infty}F^{i+1}F^{\pi}E^{2i+1}F^{\pi}E^d\\
&+&\sum\limits_{i=1}^{\infty}F^{i+1}F^{\pi}E^{2i-1}F^{\pi}E^dFF^{\pi}\big]\\
&&\big[F^{\pi}EE^d+F^{\pi}(E^d)^2FF^{\pi}+FF^{\pi}(E^d)^2+FF^{\pi}(E^d)^4FF^{\pi}\\
&+&\sum\limits_{i=1}^{\infty}F^{i+1}F^{\pi}(E^d)^{2i+2}+\sum\limits_{i=1}^{\infty}F^{i+1}F^{\pi}(E^d)^{2i+4}FF^{\pi}\big]\\
&+&\big[FF^{\pi}E^d+FF^{\pi}(E^d)^3FF^{\pi}-FF^{\pi}(E^d)^2F^{\pi}E-FF^{\pi}(E^d)^4FF^{\pi}E\\
&+&\sum\limits_{i=1}^{\infty}F^{i+1}F^{\pi}E^{2i+1}+\sum\limits_{i=1}^{\infty}F^{i+1}F^{\pi}E^{2i}E^dFF^{\pi}\\
&-&\sum\limits_{i=1}^{\infty}F^{i+1}F^{\pi}E^{2i+1}F^{\pi}E^dF^{\pi}E-\sum\limits_{i=1}^{\infty}F^{i+1}F^{\pi}E^{2i-1}F^{\pi}E^dFF^{\pi}E\big]\\
&&\big[F^{\pi}E^d+F^{\pi}(E^d)^3FF^{\pi}+\sum\limits_{i=1}^{\infty}F^iF^{\pi}E^{2i+1}+\sum\limits_{i=1}^{\infty}F^iF^{\pi}E^{2i}E^dFF^{\pi}\big],\\
\end{array}}$$
$${\scriptsize\begin{array}{lll}
\Omega&=&\big[FF^{\pi}(E^d)^2+FF^{\pi}(E^d)^4FF^{\pi}+\sum\limits_{i=1}^{\infty}F^{i+1}F^{\pi}E^{2i+1}F^{\pi}E^d\\
&+&\sum\limits_{i=1}^{\infty}F^{i+1}F^{\pi}E^{2i-1}F^{\pi}E^dFF^{\pi}\big]\\
&&\big[F^{\pi}E^dF^{\pi}+F^{\pi}(E^d)^3FF^{\pi}+FF^{\pi}(E^d)^3F^{\pi}+FF^{\pi}(E^d)^5FF^{\pi}\\
&+&\sum\limits_{i=1}^{\infty}F^{i+1}F^{\pi}(E^d)^{2i+3}F^{\pi}+\sum\limits_{i=1}^{\infty}F^{i+1}F^{\pi}(E^d)^{2i+5}FF^{\pi}\big]\\
&+&\big[FF^{\pi}E^d+F^{\pi}(E^d)^3FF^{\pi}-FF^{\pi}(E^d)^2F^{\pi}E-F^{\pi}(E^d)^4FF^{\pi}E\\
&+&\sum\limits_{i=1}^{\infty}F^{i+1}F^{\pi}E^{2i+1}+\sum\limits_{i=1}^{\infty}F^{i+1}F^{\pi}E^{2i}E^dFF^{\pi}\big]\\
&&\big[F^{\pi}(E^d)^2F^{\pi}+F^{\pi}(E^d)^4FF^{\pi}+\sum\limits_{i=1}^{\infty}F^iF^{\pi}E^{2i+1}F^{\pi}E^dF^{\pi}\\
&+&\sum\limits_{i=1}^{\infty}F^iF^{\pi}E^{2i-1}F^{\pi}E^dFF^{\pi}\big]~~~~~(**)\\
\end{array}}$$
\end{thm}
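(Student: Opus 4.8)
The plan is to separate the invertible and the quasinilpotent behaviour of $F$ by the idempotent $p=FF^{d}$ and to reduce the quasinilpotent part to the already-settled Theorem 2.3. Put $p=FF^{d}$ and $q=F^{\pi}=I-p$; since $F^{d}$ commutes with $F$, the idempotent $p$ commutes with $F$, so on the two corners $F$ becomes an invertible operator $pFp$ (its inverse being the restriction of $F^{d}$, as $F^{d}\cdot pFp=p$) and a quasinilpotent operator $qFq=FF^{\pi}$. I would then pass to the idempotent $\mathcal{E}=\left(\begin{array}{cc}p&0\\0&p\end{array}\right)$ of $M_{2}(\mathcal{A})$ and record the Pierce blocks of $M$. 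Using $F^{d}F^{\pi}=F^{\pi}F^{d}=0$ one checks $pFq=qFp=0$, so the off-diagonal blocks carry only the coupling through $E$: $\mathcal{E}M\mathcal{E}^{\pi}=\left(\begin{array}{cc}FF^{d}EF^{\pi}&0\\0&0\end{array}\right)$ and $\mathcal{E}^{\pi}M\mathcal{E}=\left(\begin{array}{cc}F^{\pi}EFF^{d}&0\\0&0\end{array}\right)$, while $\mathcal{E}M\mathcal{E}=\left(\begin{array}{cc}pEp&p\\pFp&0\end{array}\right)$ is invertible in its corner and $\mathcal{E}^{\pi}M\mathcal{E}^{\pi}=\left(\begin{array}{cc}qEq&q\\qFq&0\end{array}\right)$.

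The heart of the argument is that the quasinilpotent corner $\mathcal{E}^{\pi}M\mathcal{E}^{\pi}$ obeys the hypotheses of Theorem 2.3. Because $q$ commutes with $F$ we have $qFq=qF=Fq$ and $(qFq)^{2}=qF^{2}$, whence $(qEq)(qFq)^{2}=(F^{\pi}EF^{2})q=0$. Moreover $F^{\pi}EF^{2}=0$ gives $F^{\pi}EFp=F^{\pi}EF^{2}F^{d}=0$, i.e. $qEFq=qEF$, and therefore $(qEq)(qFq)(qEq)=qEFEq=(F^{\pi}EFE)q=0$. Finally $qEq=F^{\pi}EF^{\pi}$ is g-Drazin invertible: by Cline's formula \cite[Theorem 2.1]{LCC}, $F^{\pi}EF^{\pi}=F^{\pi}(EF^{\pi})$ and $EF^{\pi}$ are simultaneously g-Drazin invertible, and $EF^{\pi}$ is so by hypothesis. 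Thus Theorem 2.3 applies inside $M_{2}(q\mathcal{A}q)$ to the pair $(qEq,qFq)$; the same two identities show that the companion matrix $M_{0}=\left(\begin{array}{cc}F^{\pi}E&I\\F&0\end{array}\right)$ satisfies $(F^{\pi}E)F^{2}=0$ and $(F^{\pi}E)F(F^{\pi}E)=0$. This is the mechanism behind the closed block $\left(\begin{array}{cc}\Theta&\Psi\\\Phi&\Omega\end{array}\right)$, re-expressed in terms of $E^{d},E^{\pi},F^{d},F^{\pi}$.

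To assemble $M^{d}$ I would use the block structure with invertible leading corner. The operator $A=\left(\begin{array}{cc}0&F^{d}\\FF^{d}&-EF^{d}\end{array}\right)$ is the explicit inverse of the regular part: a direct check gives $MA=\left(\begin{array}{cc}FF^{d}&0\\0&FF^{d}\end{array}\right)=\mathcal{E}$ and $A\mathcal{E}^{\pi}=0$, so $A$ inverts $\mathcal{E}M\mathcal{E}$ on $\mathrm{ran}\,\mathcal{E}$ and kills $\mathcal{E}^{\pi}$. The remainder $B=\left(\begin{array}{cc}F^{\pi}E+FF^{d}EF^{\pi}&F^{\pi}\\FF^{\pi}&0\end{array}\right)$ gathers the two coupling blocks together with the quasinilpotent part $FF^{\pi}$ of $F$. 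Since $B$ is governed by $FF^{\pi}$ and by the compressed nilpotence established above, the Neumann-type series $\sum_{i=0}^{\infty}A^{i+1}B^{i}C$ converges (and terminates in the matrix setting of Corollary 2.4, where $F^{t}F^{\pi}=0$) and corrects the core $\left(\begin{array}{cc}\Theta&\Psi\\\Phi&\Omega\end{array}\right)$; here $C=I-M\left(\begin{array}{cc}\Theta&\Psi\\0&0\end{array}\right)$, which is precisely the matrix $\left(\begin{array}{cc}I-E\Theta&-E\Psi\\-F\Theta&I-F\Psi\end{array}\right)$ of the statement. Existence of $(M^{2})^{d}$, hence of $M^{d}$, then follows from \cite[Corollary 2.2]{M}, and substituting the Theorem 2.3 expressions and expanding the products yields $(**)$.

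I expect the main obstacle to be the genuine coupling $FF^{d}EF^{\pi}$ and $F^{\pi}EFF^{d}$: neither is annihilated by the hypotheses, so the Pierce decomposition is not block-triangular and Lemma 2.1 cannot be invoked directly. The delicate point is to show that these couplings interact with the quasinilpotent $FF^{\pi}$ and with $(F^{\pi}EF^{\pi})^{\pi}$ so that the entire correction organizes into the single convergent series $\sum_{i\geq0}A^{i+1}B^{i}C$; once this is in place, the remaining passage to the explicit formula $(**)$ is a substantial but routine bookkeeping of matrix products.
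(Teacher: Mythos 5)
Your setup correctly isolates most of the ingredients: the verification that the compression by $F^{\pi}$ satisfies the hypotheses of Theorem 2.3 (including the key identities $F^{\pi}EFF^{d}=F^{\pi}EF^{2}(F^{d})^{2}=0$ and $F^{\pi}EFF^{\pi}=F^{\pi}EF$, and the use of Cline's formula to get $F^{\pi}E$ g-Drazin invertible from $EF^{\pi}$), and the correct identification of $A=\left(\begin{smallmatrix}0&F^{d}\\FF^{d}&-EF^{d}\end{smallmatrix}\right)$ and $B=\left(\begin{smallmatrix}F^{\pi}E+FF^{d}EF^{\pi}&F^{\pi}\\FF^{\pi}&0\end{smallmatrix}\right)$ as the two pieces of the formula. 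But the step that actually produces the representation of $M^{d}$ --- and indeed the existence of $M^{d}$ --- is missing, and you flag it yourself as the unresolved ``delicate point.'' What the paper does here is precise and short: relative to $e=\mathrm{diag}(F^{\pi},0)$ it writes $M=P+Q$, where $P=B$ is exactly your remainder and $Q=M-B=\left(\begin{smallmatrix}FF^{d}EFF^{d}&FF^{d}\\F^{2}F^{d}&0\end{smallmatrix}\right)$ is the regular part with $Q^{d}=A$; one checks $PQ=0$ (the only nontrivial product is $bd=0$), so the additive theorem of Djordjevi\'{c} and Wei \cite[Theorem 2.3]{D} gives $M^{d}=Q^{\pi}\sum_{i\geq 0}Q^{i}(P^{d})^{i+1}+\sum_{i\geq 0}(Q^{d})^{i+1}P^{i}P^{\pi}$; since $QQ^{\pi}=0$ the first sum collapses to $Q^{\pi}P^{d}=\left(\begin{smallmatrix}\Theta&\Psi\\\Phi&\Omega\end{smallmatrix}\right)$ and the second sum is your series with $C=P^{\pi}$. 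Without naming this (or an equivalent) additive result, ``the Neumann-type series converges and corrects the core'' is an assertion, not an argument.

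Two further points. First, your core block is not $(\mathcal{E}^{\pi}M\mathcal{E}^{\pi})^{d}$: the summand $P$ carries the coupling $FF^{d}EF^{\pi}$, so computing $P^{d}$ needs one more step beyond Theorem 2.3 --- the paper applies Theorem 2.3 to $\left(\begin{smallmatrix}a&1\\bc&0\end{smallmatrix}\right)$ in the corner algebra and transfers the answer to $P=\left(\begin{smallmatrix}a&b\\c&0\end{smallmatrix}\right)$ by Cline's formula via the factorizations $\left(\begin{smallmatrix}a&bc\\1&0\end{smallmatrix}\right)=\left(\begin{smallmatrix}a&b\\1&0\end{smallmatrix}\right)\left(\begin{smallmatrix}1&0\\0&c\end{smallmatrix}\right)$ and $P=\left(\begin{smallmatrix}1&0\\0&c\end{smallmatrix}\right)\left(\begin{smallmatrix}a&b\\1&0\end{smallmatrix}\right)$; the unwanted entries of $P^{d}$ are then killed by the factor $Q^{\pi}$. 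Second, your closing remark that $F^{\pi}EFF^{d}$ is ``not annihilated by the hypotheses'' contradicts your own earlier (correct) computation: since $FF^{d}=F^{2}(F^{d})^{2}$, the hypothesis $F^{\pi}EF^{2}=0$ kills it, and only the single coupling $FF^{d}EF^{\pi}$ survives. Finally, there is no $M^{2}$ anywhere in this argument, so invoking \cite[Corollary 2.2]{M} for ``existence of $(M^{2})^{d}$, hence of $M^{d}$'' is out of place; existence comes from the additive theorem applied to $P+Q$.
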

\begin{proof} Let $e=\left(
  \begin{array}{cc}
    F^{\pi}&0\\
    0&0
  \end{array}
\right)$. Then
$M=\left(
  \begin{array}{cc}
    a&b\\
    c&d
  \end{array}
\right)_e,$ where $$a=eMe, b=eM(I-e), c=(I-e)Me, d=(I-e)M(I-e).$$ Since $F^{\pi}EF^2=0$ and $F^{\pi}EFE=0$, we have
$$\begin{array}{c}
a=\left(
  \begin{array}{cc}
    F^{\pi}E&0\\
    0&0
  \end{array}
\right), b=\left(
  \begin{array}{cc}
    0&F^{\pi}\\
    0&0
  \end{array}
\right),\\
c=\left(
  \begin{array}{cc}
    FF^dEF^{\pi}&0\\
    FF^{\pi}&0
  \end{array}
\right),  d=\left(
  \begin{array}{cc}
    EFF^d&FF^d\\
    F^2F^d&0
  \end{array}
\right).
\end{array}$$ By direct computation, we get $$d^d=\left(
  \begin{array}{cc}
    0&F^d\\
    FF^d&-EF^d
  \end{array}
\right), d^{\pi}=\left(
  \begin{array}{cc}
    0&0\\
    0&F^{\pi}
  \end{array}
\right), dd^{\pi}=0.$$  As $EF^{\pi}$ has g-Drazin inverse, it follows by Cline's formula,
$F^{\pi}E$ has g-Drazin inverse. Since $F^{\pi}EFF^d=0$, it follows by~\cite[Lemma 2.2]{ZM} that $(F^{\pi}E)^d=F^{\pi}E^d$, and so $$a^d=\left(
  \begin{array}{cc}
    F^{\pi}E^d&0\\
    0&0
  \end{array}
\right), a^{\pi}=\left(
  \begin{array}{cc}
    F^{\pi}E^{\pi}&0\\
    0&0
  \end{array}
\right).$$ Write $M=P+Q$, where $$P=\left(
  \begin{array}{cc}
    a&b\\
    c&0
  \end{array}
\right)_e, Q=\left(
  \begin{array}{cc}
    0&0\\
    0&d
  \end{array}
\right)_e.$$

Step 1. We prove that$P$ has g-Drazin inverse. Clearly, $bd=0, bc=\left(
  \begin{array}{cc}
    FF^{\pi}&0\\
    0&0
  \end{array}
\right)$ has g-Drazin inverse, $(bc)^d=0$ and $(bc)^{\pi}=1$. We easily check that
$a(bc)^2=0, a(bc)a=0.$ In light of Theorem 2.3, $\left(
  \begin{array}{cc}
    a&1\\
    bc&0
  \end{array}
\right)$ has g-Drazin inverse. Moreover, we have
$$\left(
  \begin{array}{cc}
    a&1\\
    bc&0
  \end{array}
\right)^d=\left(\begin{array}{cc}
\Gamma&\Delta\\
 \Lambda&\Xi
\end{array}
\right),$$ where $${\scriptsize\begin{array}{l}
(\Gamma)_{11}=F^{\pi}E^d+F^{\pi}(E^d)^3FF^{\pi}+\sum\limits_{i=1}^{\infty}F^iF^{\pi}E^{2i+1}+\sum\limits_{i=1}^{\infty}F^iF^{\pi}E^{2i}E^dFF^{\pi},\\
(\Delta)_{11}=F^{\pi}(E^d)^2+F^{\pi}(E^d)^4FF^{\pi}+\sum\limits_{i=1}^{\infty}F^iF^{\pi}E^{2i+1}F^{\pi}E^d+\sum\limits_{i=1}^{\infty}F^iF^{\pi}E^{2i-1}F^{\pi}E^dFF^{\pi},\\
(\Lambda)_{11}=FF^{\pi}(E^d)^2+FF^{\pi}(E^d)^4FF^{\pi}+\sum\limits_{i=1}^{\infty}F^{i+1}F^{\pi}(E^d)^{2i+2}+\sum\limits_{i=1}^{\infty}F^{i+1}F^{\pi}(E^d)^{2i+4}FF^{\pi},\\
(\Xi)_{11}=FF^{\pi}(E^d)^3+FF^{\pi}(E^d)^5FF^{\pi}+\sum\limits_{i=1}^{\infty}F^{i+1}F^{\pi}(E^d)^{2i+3}+\sum\limits_{i=1}^{\infty}F^{i+1}F^{\pi}(E^d)^{2i+5}FF^{\pi},\\
(\Gamma)_{ij}=0, (\Delta)_{ij}=0, (\Lambda)_{ij}=0, (\Xi)_{ij}=0, otherwise.
\end{array}}$$
Since
$$\left(
  \begin{array}{cc}
  a&bc\\
1&0
\end{array}
\right)=\left(
  \begin{array}{cc}
    0 &1 \\
   1 & -a
  \end{array}
\right)^{-1}\left(
  \begin{array}{cc}
   a & 1 \\
   bc &0
  \end{array}
\right)\left(
  \begin{array}{cc}
    0 &1 \\
   1 & -a
  \end{array}
\right),$$ we see that $\left(
  \begin{array}{cc}
  a&bc\\
1&0
\end{array}
\right)$ has g-Drazin inverse and $$\left(
  \begin{array}{cc}
  a&bc\\
1&0
\end{array}
\right)^d=\left(
  \begin{array}{cc}
    0 &1 \\
   1 & -a
  \end{array}
\right)^{-1}\left(
  \begin{array}{cc}
   a & 1 \\
   bc &0
  \end{array}
\right)^d\left(
  \begin{array}{cc}
    0 &1 \\
   1 & -a
  \end{array}
\right);$$ hence, $$\left(
  \begin{array}{cc}
  a&bc\\
1&0
\end{array}
\right)^d=\left(
  \begin{array}{cc}
  a\Delta +\Xi&a\Gamma+\Lambda-a\Delta a-\Xi a\\
\Delta&\Gamma-\Delta a
\end{array}
\right).$$
Clearly, we have $$\left(
  \begin{array}{cc}
    a&bc\\
    1&0
  \end{array}
\right)=\left(
  \begin{array}{cc}
    a&b\\
    1&0
  \end{array}
\right)\left(
  \begin{array}{cc}
    1&0\\
    0&c
  \end{array}
\right).$$ By using Cline's formula,
$$\begin{array}{lll}
P^d&=&\left(
  \begin{array}{cc}
   1&0\\
    0&c
  \end{array}
\right)\big(\left(
  \begin{array}{cc}
  a&bc\\
1&0
\end{array}
\right)^d\big)^2\left(
  \begin{array}{cc}
     a&b\\
    1&0
  \end{array}
\right)\\
&=&\left(
  \begin{array}{cc}
   a\Delta +\Xi&a\Gamma+\Lambda-a\Delta a-\Xi a\\
c\Delta&c\Gamma-c\Delta a
  \end{array}
\right)\left(
  \begin{array}{cc}
   a\Gamma+\Lambda&a\Delta b+\Xi b\\
\Gamma&\Delta b
  \end{array}
\right)\\
&=&\left(\begin{array}{cc}
\gamma&\delta \\
 \lambda&\xi
\end{array}
\right),
\end{array}$$ where $$\begin{array}{lll}
\gamma&=&(a\Delta +\Xi)(a\Gamma+\Lambda)+(a\Gamma+\Lambda-a\Delta a-\Xi a)\Gamma,\\
\delta&=&(a\Delta +\Xi)^2b+(a\Gamma+\Lambda-a\Delta a-\Xi a)\Delta b,\\
\lambda&=&c\Delta (a\Gamma+\Lambda)+c(\Gamma-\Delta a)\Gamma,\\
\xi&=&c\Delta (a\Delta +\Xi )b+c(\Gamma-\Delta a)\Delta b.
\end{array}$$ We directly compute that $$\begin{array}{c}
a\Delta=\left(
  \begin{array}{cc}
  F^{\pi}E^d+F^{\pi}(E^d)^3FF^{\pi}&0\\
0&0
\end{array}
\right),\\
a\Gamma=\left(
  \begin{array}{cc}
  F^{\pi}EE^d+F^{\pi}(E^d)^2FF^{\pi}&0\\
0&0
\end{array}
\right).
\end{array}$$
Hence we have $$\begin{array}{lll}
(a\Delta+\Xi)_{11}&=&F^{\pi}E^d+F^{\pi}(E^d)^3FF^{\pi}+FF^{\pi}(E^d)^3+FF^{\pi}(E^d)^5FF^{\pi}\\
&+&\sum\limits_{i=1}^{\infty}F^{i+1}F^{\pi}(E^d)^{2i+3}+\sum\limits_{i=1}^{\infty}F^{i+1}F^{\pi}(E^d)^{2i+5}FF^{\pi},\\
(a\Delta+\Xi)_{ij}&=&0, ~~\mbox{otherwise};\\
(a\Gamma+\Lambda)_{11}&=&F^{\pi}EE^d+F^{\pi}(E^d)^2FF^{\pi}+FF^{\pi}(E^d)^2+FF^{\pi}(E^d)^4FF^{\pi}\\
&+&\sum\limits_{i=1}^{\infty}F^{i+1}F^{\pi}(E^d)^{2i+2}+\sum\limits_{i=1}^{\infty}F^{i+1}F^{\pi}(E^d)^{2i+4}FF^{\pi},\\
(a\Gamma+\Lambda)_{ij}&=&0, ~~\mbox{otherwise.}
\end{array}$$
Also we have $$\begin{array}{ll}
&(a\Gamma+\Lambda-a\Delta a-\Xi a)_{11}\\
=&F^{\pi}EE^d+F^{\pi}(E^d)^2FF^{\pi}+FF^{\pi}(E^d)^2+FF^{\pi}(E^d)^4FF^{\pi}\\
-&F^{\pi}E^dF^{\pi}E-F^{\pi}(E^d)^3FF^{\pi}E-FF^{\pi}(E^d)^3F^{\pi}E-FF^{\pi}(E^d)^5FF^{\pi}E\\
+&\sum\limits_{i=1}^{\infty}F^{i+1}F^{\pi}(E^d)^{2i+2}+\sum\limits_{i=1}^{\infty}F^{i+1}F^{\pi}(E^d)^{2i+4}FF^{\pi}\\
-&\sum\limits_{i=1}^{\infty}F^{i+1}F^{\pi}(E^d)^{2i+3}F^{\pi}E-\sum\limits_{i=1}^{\infty}F^{i+1}F^{\pi}(E^d)^{2i+5}FF^{\pi}E,\\
&(a\Gamma+\Lambda-a\Delta a-\Xi a)_{ij}=0,~~\mbox{otherwise}.
\end{array}$$
Therefore $${\scriptsize\begin{array}{lll}
(\gamma)_{11}&=&\big[F^{\pi}E^d+F^{\pi}(E^d)^3FF^{\pi}+FF^{\pi}(E^d)^3+FF^{\pi}(E^d)^5FF^{\pi}\\
&+&\sum\limits_{i=1}^{\infty}F^{i+1}F^{\pi}(E^d)^{2i+3}+\sum\limits_{i=1}^{\infty}F^{i+1}F^{\pi}(E^d)^{2i+5}FF^{\pi}\big]\\
&&\big[F^{\pi}EE^d+F^{\pi}(E^d)^2FF^{\pi}+FF^{\pi}(E^d)^2+FF^{\pi}(E^d)^4FF^{\pi}\\
&+&\sum\limits_{i=1}^{\infty}F^{i+1}F^{\pi}(E^d)^{2i+2}+\sum\limits_{i=1}^{\infty}F^{i+1}F^{\pi}(E^d)^{2i+4}FF^{\pi}\big]\\
&+&\big[F^{\pi}EE^d+F^{\pi}(E^d)^2FF^{\pi}+FF^{\pi}(E^d)^2+FF^{\pi}(E^d)^4FF^{\pi}\\
&-&F^{\pi}E^dF^{\pi}E-F^{\pi}(E^d)^3FF^{\pi}E-FF^{\pi}(E^d)^3F^{\pi}E-FF^{\pi}(E^d)^5FF^{\pi}E\\
&+&\sum\limits_{i=1}^{\infty}F^{i+1}F^{\pi}(E^d)^{2i+2}+\sum\limits_{i=1}^{\infty}F^{i+1}F^{\pi}(E^d)^{2i+4}FF^{\pi}\\
&-&\sum\limits_{i=1}^{\infty}F^{i+1}F^{\pi}(E^d)^{2i+3}F^{\pi}E-\sum\limits_{i=1}^{\infty}F^{i+1}F^{\pi}(E^d)^{2i+5}FF^{\pi}E\big]\\
&&\big[F^{\pi}E^d+F^{\pi}(E^d)^3FF^{\pi}+\sum\limits_{i=1}^{\infty}F^iF^{\pi}E^{2i+1}+\sum\limits_{i=1}^{\infty}F^iF^{\pi}E^{2i}E^dFF^{\pi}\big],\\
&&(\gamma)_{ij}=0,~~\mbox{otherwise}.
\end{array}}$$
We see that $${\scriptsize\begin{array}{rll}
(c\Delta)_{11}&=&FF^dEF^{\pi}(E^d)^2+FF^dEF^{\pi}(E^d)^4FF^{\pi}+\sum\limits_{i=1}^{\infty}FF^dEF^iF^{\pi}E^{2i+1}F^{\pi}E^d\\
&+&\sum\limits_{i=1}^{\infty}FF^dEF^iF^{\pi}E^{2i-1}F^{\pi}E^dFF^{\pi},\\
(c\Delta)_{21}&=&FF^{\pi}(E^d)^2+FF^{\pi}(E^d)^4FF^{\pi}+\sum\limits_{i=1}^{\infty}F^{i+1}F^{\pi}E^{2i+1}F^{\pi}E^d\\
&+&\sum\limits_{i=1}^{\infty}F^{i+1}F^{\pi}E^{2i-1}F^{\pi}E^dFF^{\pi},\\
(c\Delta)_{ij}&=&0,~~\mbox{otherwise};\\
(\Delta b)_{12}&=&F^{\pi}(E^d)^2F^{\pi}+F^{\pi}(E^d)^4FF^{\pi}+\sum\limits_{i=1}^{\infty}F^iF^{\pi}E^{2i+1}F^{\pi}E^dF^{\pi}\\
&+&\sum\limits_{i=1}^{\infty}F^iF^{\pi}E^{2i-1}F^{\pi}E^dFF^{\pi},\\
(\Delta b)_{ij}&=&0,~~\mbox{otherwise};\\
\end{array}}$$

$${\scriptsize\begin{array}{lll}
(\Gamma-\Delta a)_{11}&=&F^{\pi}E^d+F^{\pi}(E^d)^3FF^{\pi}-F^{\pi}(E^d)^2F^{\pi}E-F^{\pi}(E^d)^4FF^{\pi}E\\
&+&\sum\limits_{i=1}^{\infty}F^iF^{\pi}E^{2i+1}+\sum\limits_{i=1}^{\infty}F^iF^{\pi}E^{2i}E^dFF^{\pi}\\
&-&\sum\limits_{i=1}^{\infty}F^iF^{\pi}E^{2i+1}F^{\pi}E^dF^{\pi}E-\sum\limits_{i=1}^{\infty}F^iF^{\pi}E^{2i-1}F^{\pi}E^dFF^{\pi}E,\\
(\Gamma-\Delta a)_{ij}&=&0,~~\mbox{otherwise};\\
\end{array}}$$
Thus, $${\scriptsize\begin{array}{lll}
(\delta)_{12}&=&\big[F^{\pi}E^dF^{\pi}+F^{\pi}(E^d)^3FF^{\pi}+FF^{\pi}(E^d)^3F^{\pi}+FF^{\pi}(E^d)^5FF^{\pi}\\
&+&\sum\limits_{i=1}^{\infty}F^{i+1}F^{\pi}(E^d)^{2i+3}F^{\pi}+\sum\limits_{i=1}^{\infty}F^{i+1}F^{\pi}(E^d)^{2i+5}FF^{\pi}\big]^2\\
&+&\big[F^{\pi}EE^d+F^{\pi}(E^d)^2FF^{\pi}+FF^{\pi}(E^d)^2+FF^{\pi}(E^d)^4FF^{\pi}\\
&-&F^{\pi}E^dF^{\pi}E-F^{\pi}(E^d)^3FF^{\pi}E-FF^{\pi}(E^d)^3F^{\pi}E-FF^{\pi}(E^d)^5FF^{\pi}E\\
&+&\sum\limits_{i=1}^{\infty}F^{i+1}F^{\pi}(E^d)^{2i+2}+\sum\limits_{i=1}^{\infty}F^{i+1}F^{\pi}(E^d)^{2i+4}FF^{\pi}\\
&-&\sum\limits_{i=1}^{\infty}F^{i+1}F^{\pi}(E^d)^{2i+3}F^{\pi}E-\sum\limits_{i=1}^{\infty}F^{i+1}F^{\pi}(E^d)^{2i+5}FF^{\pi}E\big]\\
&&\big[F^{\pi}(E^d)^2F^{\pi}+F^{\pi}(E^d)^4FF^{\pi}+\sum\limits_{i=1}^{\infty}F^iF^{\pi}E^{2i+1}F^{\pi}E^dF^{\pi}\\
&+&\sum\limits_{i=1}^{\infty}F^iF^{\pi}E^{2i-1}F^{\pi}E^dFF^{\pi}\big],\\
&&(\delta)_{ij}=0,~~\mbox{otherwise}.
\end{array}}$$
Moreover, we have $${\scriptsize\begin{array}{lll}
(\lambda)_{11}&=&\big[FF^dEF^{\pi}(E^d)^2+FF^dEF^{\pi}(E^d)^4FF^{\pi}+\sum\limits_{i=1}^{\infty}FF^dEF^iF^{\pi}E^{2i+1}F^{\pi}E^d\\
&+&\sum\limits_{i=1}^{\infty}FF^dEF^iF^{\pi}E^{2i-1}F^{\pi}E^dFF^{\pi}\big]\\
&&\big[F^{\pi}EE^d+F^{\pi}(E^d)^2FF^{\pi}+FF^{\pi}(E^d)^2+FF^{\pi}(E^d)^4FF^{\pi}\\
&+&\sum\limits_{i=1}^{\infty}F^{i+1}F^{\pi}(E^d)^{2i+2}+\sum\limits_{i=1}^{\infty}F^{i+1}F^{\pi}(E^d)^{2i+4}FF^{\pi}\big]\\
&+&\big[FF^dEF^{\pi}E^d+FF^dEF^{\pi}(E^d)^3FF^{\pi}-FF^dEF^{\pi}(E^d)^2F^{\pi}E\\
&-&FF^dEF^{\pi}(E^d)^4FF^{\pi}E+\sum\limits_{i=1}^{\infty}FF^dEF^iF^{\pi}E^{2i+1}+\sum\limits_{i=1}^{\infty}FF^dEF^iF^{\pi}E^{2i}E^dFF^{\pi}\\
&-&\sum\limits_{i=1}^{\infty}FF^dEF^iF^{\pi}E^{2i+1}F^{\pi}E^dF^{\pi}E-\sum\limits_{i=1}^{\infty}FF^dEF^iF^{\pi}E^{2i-1}F^{\pi}E^dFF^{\pi}E\big]\\
&&\big[F^{\pi}E^d+F^{\pi}(E^d)^3FF^{\pi}+\sum\limits_{i=1}^{\infty}F^iF^{\pi}E^{2i+1}+\sum\limits_{i=1}^{\infty}F^iF^{\pi}E^{2i}E^dFF^{\pi}\big]
\end{array}}$$
$${\scriptsize\begin{array}{lll}
(\lambda)_{21}&=&\big[FF^{\pi}(E^d)^2+FF^{\pi}(E^d)^4FF^{\pi}+\sum\limits_{i=1}^{\infty}F^{i+1}F^{\pi}E^{2i+1}F^{\pi}E^d\\
&+&\sum\limits_{i=1}^{\infty}F^{i+1}F^{\pi}E^{2i-1}F^{\pi}E^dFF^{\pi}\big]\\
&&\big[F^{\pi}EE^d+F^{\pi}(E^d)^2FF^{\pi}+FF^{\pi}(E^d)^2+FF^{\pi}(E^d)^4FF^{\pi}\\
&+&\sum\limits_{i=1}^{\infty}F^{i+1}F^{\pi}(E^d)^{2i+2}+\sum\limits_{i=1}^{\infty}F^{i+1}F^{\pi}(E^d)^{2i+4}FF^{\pi}\big]\\
&+&\big[FF^{\pi}E^d+FF^{\pi}(E^d)^3FF^{\pi}-FF^{\pi}(E^d)^2F^{\pi}E-FF^{\pi}(E^d)^4FF^{\pi}E\\
&+&\sum\limits_{i=1}^{\infty}F^{i+1}F^{\pi}E^{2i+1}+\sum\limits_{i=1}^{\infty}F^{i+1}F^{\pi}E^{2i}E^dFF^{\pi}\\
&-&\sum\limits_{i=1}^{\infty}F^{i+1}F^{\pi}E^{2i+1}F^{\pi}E^dF^{\pi}E-\sum\limits_{i=1}^{\infty}F^{i+1}F^{\pi}E^{2i-1}F^{\pi}E^dFF^{\pi}E\big]\\
&&\big[F^{\pi}E^d+F^{\pi}(E^d)^3FF^{\pi}+\sum\limits_{i=1}^{\infty}F^iF^{\pi}E^{2i+1}+\sum\limits_{i=1}^{\infty}F^iF^{\pi}E^{2i}E^dFF^{\pi}\big],\\
&&(\lambda)_{ij}=0,~~\mbox{otherwise}.
\end{array}}$$
Furthermore, we get $${\scriptsize\begin{array}{lll}
(\xi)_{12}&=&\big[FF^dEF^{\pi}(E^d)^2+FF^dEF^{\pi}(E^d)^4FF^{\pi}+\sum\limits_{i=1}^{\infty}FF^dEF^iF^{\pi}E^{2i+1}F^{\pi}E^d\\
&+&\sum\limits_{i=1}^{\infty}FF^dEF^iF^{\pi}E^{2i-1}F^{\pi}E^dFF^{\pi}\big]\\
&&\big[F^{\pi}E^dF^{\pi}+F^{\pi}(E^d)^3FF^{\pi}+FF^{\pi}(E^d)^3F^{\pi}+FF^{\pi}(E^d)^5FF^{\pi}\\
&+&\sum\limits_{i=1}^{\infty}F^{i+1}F^{\pi}(E^d)^{2i+3}F^{\pi}+\sum\limits_{i=1}^{\infty}F^{i+1}F^{\pi}(E^d)^{2i+5}FF^{\pi}\big]\\
&+&\big[FF^dEF^{\pi}E^d+FF^dEF^{\pi}(E^d)^3FF^{\pi}-FF^dEF^{\pi}(E^d)^2F^{\pi}E\\
&-&FF^dEF^{\pi}(E^d)^4FF^{\pi}E+\sum\limits_{i=1}^{\infty}FF^dEF^iF^{\pi}E^{2i+1}+\sum\limits_{i=1}^{\infty}FF^dEFF^iF^{\pi}E^{2i}E^dFF^{\pi}\\
&-&\sum\limits_{i=1}^{\infty}FF^dEF^iF^{\pi}E^{2i+1}F^{\pi}E^dF^{\pi}E-\sum\limits_{i=1}^{\infty}FF^dEF^iF^{\pi}E^{2i-1}F^{\pi}E^dFF^{\pi}E\big]\\
&&\big[F^{\pi}(E^d)^2F^{\pi}+F^{\pi}(E^d)^4FF^{\pi}+\sum\limits_{i=1}^{\infty}F^iF^{\pi}E^{2i+1}F^{\pi}E^dF^{\pi}\\
&+&\sum\limits_{i=1}^{\infty}F^iF^{\pi}E^{2i-1}F^{\pi}E^dFF^{\pi}\big]\end{array}}$$
$${\scriptsize\begin{array}{lll}
(\xi)_{22}&=&\big[FF^{\pi}(E^d)^2+FF^{\pi}(E^d)^4FF^{\pi}+\sum\limits_{i=1}^{\infty}F^{i+1}F^{\pi}E^{2i+1}F^{\pi}E^d\\
&+&\sum\limits_{i=1}^{\infty}F^{i+1}F^{\pi}E^{2i-1}F^{\pi}E^dFF^{\pi}\big]\\
&&\big[F^{\pi}E^dF^{\pi}+F^{\pi}(E^d)^3FF^{\pi}+FF^{\pi}(E^d)^3F^{\pi}+FF^{\pi}(E^d)^5FF^{\pi}\\
&+&\sum\limits_{i=1}^{\infty}F^{i+1}F^{\pi}(E^d)^{2i+3}F^{\pi}+\sum\limits_{i=1}^{\infty}F^{i+1}F^{\pi}(E^d)^{2i+5}FF^{\pi}\big]\\
&+&\big[FF^{\pi}E^d+F^{\pi}(E^d)^3FF^{\pi}-FF^{\pi}(E^d)^2F^{\pi}E-F^{\pi}(E^d)^4FF^{\pi}E\\
&+&\sum\limits_{i=1}^{\infty}F^{i+1}F^{\pi}E^{2i+1}+\sum\limits_{i=1}^{\infty}F^{i+1}F^{\pi}E^{2i}E^dFF^{\pi}\big]\\
&&\big[F^{\pi}(E^d)^2F^{\pi}+F^{\pi}(E^d)^4FF^{\pi}+\sum\limits_{i=1}^{\infty}F^iF^{\pi}E^{2i+1}F^{\pi}E^dF^{\pi}\\
&+&\sum\limits_{i=1}^{\infty}F^iF^{\pi}E^{2i-1}F^{\pi}E^dFF^{\pi}\big],\\
&&(\xi)_{ij}=0,~~\mbox{otherwise}.
\end{array}}$$

Step 2. We claim that $Q$ has g-Drazin inverse. We have $$Q^d=\left(
  \begin{array}{cc}
    0&0\\
    0&d^d
  \end{array}
\right)_e~\mbox{and}~Q^{\pi}=\left(
  \begin{array}{cc}
    e&0\\
    0&d^{\pi}
  \end{array}
\right)_e.$$

Step 3. Since $bd=0,$ we have $PQ=0$. Clearly, we have

$$\begin{array}{lll}
Q^{\pi}P^d&=&\left(\begin{array}{cc}
\gamma&\delta \\
 d^{\pi}\lambda&d^{\pi}\xi
\end{array}
\right)_e\\
&=&\gamma+\delta+d^{\pi}\lambda+d^{\pi}\xi\\
&=&\left(\begin{array}{cc}
\gamma_{11}&\delta_{12} \\
 \lambda_{21}&\xi_{22}
\end{array}
\right)
\end{array}$$

By direct computation, we have
$$\begin{array}{lll}
(a+c)(\gamma+\delta)&=&\left(\begin{array}{cc}
E-FF^dEFF^d&0 \\
FF^{\pi}&0
\end{array}
\right)\left(\begin{array}{cc}
\gamma_{11}&\delta_{12}\\
0&0
\end{array}
\right)\\
&=&\left(\begin{array}{cc}
E\gamma_{11}&E\delta_{12}\\
F\gamma_{11}&F\delta_{12}\\
\end{array}
\right)
\end{array}$$
and
$$\begin{array}{lll}
b(\gamma+\delta)&=&\left(\begin{array}{cc}
0&F^{\pi} \\
0&0
\end{array}
\right)\left(\begin{array}{cc}
\gamma_{11}&\delta_{12}\\
0&0
\end{array}
\right)\\
&=&0.
\end{array}$$
Hence,
$$\begin{array}{lll}
PP^d&=&\left(\begin{array}{cc}
a&b\\
c&0
\end{array}
\right)_e\left(\begin{array}{cc}
\gamma&\delta \\
 \lambda&\xi
\end{array}
\right)_e\\
&=&(a+c)(\gamma+\delta)+b(\lambda+\xi)\\
&=&\left(\begin{array}{cc}
E\gamma_{11}&E\delta_{12}\\
F\gamma_{11}&F\delta_{12}\\
\end{array}
\right).
\end{array}$$
Also we see that $Q^{\pi}Q^i=0$ for any $i\geq 1$. In view of~\cite[Theorem 2.3]{D}, we have
$$\begin{array}{lll}
M^d&=&Q^{\pi}\sum\limits_{i=0}^{\infty}Q^i(P^d)^{i+1}+\sum\limits_{i=0}^{\infty}(Q^d)^{i+1}P^iP^{\pi}\\
&=&\left(
  \begin{array}{cc}
   \Theta&\Psi\\
    \Phi&\Omega
  \end{array}
\right)+\sum\limits_{i=0}^{\infty}\left(
  \begin{array}{cc}
    0&F^d\\
   FF^d&-EF^d
  \end{array}
\right)^{i+1}\\
&&\left(
  \begin{array}{cc}
   F^{\pi}E+FF^dEF^{\pi}&F^{\pi}\\
    FF^{\pi}&0
  \end{array}
\right)^i\left(
  \begin{array}{cc}
   I-E\Theta&-E\Psi\\
    -F\Theta&I-F\Psi
  \end{array}
\right),
\end{array}$$ where $\Theta=\gamma_{11}, \Psi=\delta_{12}, \Phi=\lambda_{21}, \Omega=\xi_{22},$ as asserted.\end{proof}

\begin{cor} Let $E$ and $F$ have g-Drazin inverses. If $F^{\pi}EF^2=0$ and $F^{\pi}EFE=0$, then $M=\left(
  \begin{array}{cc}
    E&F\\
    I&0
  \end{array}
\right)$ has g-Drazin inverse. In this case, $$\begin{array}{lll}
M^d&=&\left(
  \begin{array}{cc}
   E\Psi+\Omega&E\Theta-E\Psi E+\Phi-\Omega E\\
    \Psi&\Theta-\Psi E
  \end{array}
\right)+\sum\limits_{i=0}^{\infty}\left(
  \begin{array}{cc}
    FF^d&0\\
   0&F^d
  \end{array}
\right)\\
&&\left(
  \begin{array}{cc}
    0&F^d\\
   FF^d&-EF^d
  \end{array}
\right)^{i}\left(
  \begin{array}{cc}
   F^{\pi}E+FF^dEF^{\pi}&F^{\pi}\\
    FF^{\pi}&0
  \end{array}
\right)^i\\
&&\left(
  \begin{array}{cc}
   -E\Psi&I-E\Theta +E\Psi E\\
    I-F\Psi&-E-F\Theta+F\Psi E
  \end{array}
\right),
\end{array}$$ where $\Theta, \Psi, \Phi, \Omega$ as given in $(**)$.\end{cor}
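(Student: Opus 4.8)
The plan is to deduce this Corollary from Theorem 2.5 by a Cline-type factorisation, in direct analogy with the manipulation already performed in the Introduction. Set
$$A=\left(\begin{array}{cc} I&0\\ 0&F\end{array}\right),\qquad B=\left(\begin{array}{cc} E&I\\ I&0\end{array}\right).$$
A one-line computation gives $AB=\left(\begin{array}{cc} E&I\\ F&0\end{array}\right)$ and $BA=\left(\begin{array}{cc} E&F\\ I&0\end{array}\right)=M$. Thus the matrix $M$ of the Corollary is $BA$, whereas $AB$ is exactly the block matrix treated in Theorem 2.5, and the hypotheses $F^{\pi}EF^2=0$, $F^{\pi}EFE=0$ are verbatim those of that theorem.

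First I would apply Theorem 2.5 to $AB$: it has a g-Drazin inverse $(AB)^d$, equal to $\left(\begin{array}{cc}\Theta&\Psi\\ \Phi&\Omega\end{array}\right)$ plus the series displayed in $(**)$. By Cline's formula (\cite[Theorem 2.1]{LCC}), $BA=M$ then also has a g-Drazin inverse and
$$M^d=(BA)^d=B\big((AB)^d\big)^2A.$$
This settles existence, and it only remains to recast the right-hand side into the stated shape.

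For the computation I would use the additive structure of $(AB)^d$ furnished by the proof of Theorem 2.5, writing $(AB)^d=U+V$ with $U=Q^{\pi}P^d$ and $V=\sum_{i=0}^{\infty}(Q^d)^{i+1}P^iP^{\pi}$, where $P,Q$ are the two summands used there (with $PQ=0$ and $QQ^{\pi}=0$). These relations force $P^dQ=0$, $P^dQ^d=0$, $P^{\pi}Q=Q$ and $P^{\pi}Q^d=Q^d$, whence $UV=0$, $U^2=Q^{\pi}(P^d)^2$, $VU=-Q^dP^d$ and $V^2=Q^dV$ all follow in a couple of lines. Consequently
$$\big((AB)^d\big)^2=Q^{\pi}(P^d)^2-Q^dP^d+Q^dV,$$
in which the first two terms are finite and the third is a convergent series $Q^dV=\sum_{i=0}^{\infty}(Q^d)^{i+2}P^iP^{\pi}$. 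Substituting this into $B\big((AB)^d\big)^2A$ and expanding the outer factors block-by-block, the two finite terms collapse to the leading matrix $\left(\begin{array}{cc}E\Psi+\Omega&E\Theta-E\Psi E+\Phi-\Omega E\\ \Psi&\Theta-\Psi E\end{array}\right)$, while $B(Q^dV)A$ reindexes into the displayed series with left factor $\left(\begin{array}{cc}FF^d&0\\ 0&F^d\end{array}\right)$ and right factor $\left(\begin{array}{cc}-E\Psi&I-E\Theta+E\Psi E\\ I-F\Psi&-E-F\Theta+F\Psi E\end{array}\right)$.

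The hard part is this final, entirely mechanical, bookkeeping: one must carry left multiplication by $B$ and right multiplication by $A$ through each block of $\big((AB)^d\big)^2$ and reindex the series so that it matches $(**)$. The collapses rely only on identities already in play, namely $FF^dF=F$, the commutation of $E$ with $E^d$, and the formula $(F^{\pi}E)^d=F^{\pi}E^d$ established in the proof of Theorem 2.5; no structural idea beyond the factorisation $M=BA$ with $AB$ the matrix of Theorem 2.5, together with Cline's formula, is required.
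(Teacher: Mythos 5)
Your reduction is sound and is in substance the same as the paper's: with $A=\operatorname{diag}(I,F)$ and $B=\left(\begin{smallmatrix}E&I\\I&0\end{smallmatrix}\right)$ one has $AB=\left(\begin{smallmatrix}E&I\\F&0\end{smallmatrix}\right)$, $BA=M$, and since $B$ is invertible with $B^{-1}=\left(\begin{smallmatrix}0&I\\I&-E\end{smallmatrix}\right)$, Cline's formula $M^d=B\big((AB)^d\big)^2A$ collapses to the similarity $M^d=B(AB)^dB^{-1}$ that the paper actually uses, because $\big((AB)^d\big)^2A=\big((AB)^d\big)^2(AB)B^{-1}=(AB)^dB^{-1}$. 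So existence and the validity of the \emph{total} formula follow exactly as in the paper's proof (which invokes its Theorem 2.6, not 2.5).

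The one step that would fail as written is the final bookkeeping claim. Write $(AB)^d=U+V$ with $U=Q^{\pi}P^d$ and $V=\sum_{i\geq 0}(Q^d)^{i+1}P^iP^{\pi}$, where $P,Q$ are as in the proof of Theorem 2.6; the displayed leading matrix is $BUB^{-1}$ and the displayed series is $BVB^{-1}$. Your identities $U^2=Q^{\pi}(P^d)^2$, $UV=0$, $VU=-Q^dP^d$, $V^2=Q^dV$ are all correct, but $B(U^2+VU)A$ is \emph{not} the leading matrix and $B(Q^dV)A$ is \emph{not} the series: right-multiplying by $B$ gives $(U^2+VU)(AB)=U-Q^dPP^d$ and $(Q^dV)(AB)=V+Q^dPP^d$, so your two pieces are each off by $\mp BQ^dPP^dB^{-1}$. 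Here $Q^dP=\left(\begin{smallmatrix}0&0\\FF^dEF^{\pi}&0\end{smallmatrix}\right)$, and $FF^dEF^{\pi}$ is not annihilated by the hypotheses $F^{\pi}EF^2=0$, $F^{\pi}EFE=0$ (it survives explicitly in the block $c$ of the paper's proof), so $Q^dPP^d\neq 0$ in general and only the \emph{sum} of your two pieces matches the display. The repair is to skip the squaring entirely: apply $\big((AB)^d\big)^2A=(AB)^dB^{-1}$ first, so $M^d=B(U+V)B^{-1}=BUB^{-1}+BVB^{-1}$, and then $BQ^d=\operatorname{diag}(FF^d,F^d)$ and $P^{\pi}B^{-1}$ give the displayed left and right factors of the series term by term.
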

\begin{proof} Obviously, we have
$$\left(
  \begin{array}{cc}
  E&F\\
I&0
\end{array}
\right)=\left(
  \begin{array}{cc}
    0 &I \\
   I & -E
  \end{array}
\right)^{-1}\left(
  \begin{array}{cc}
   E & I \\
   F &0
  \end{array}
\right)\left(
  \begin{array}{cc}
    0 &I \\
   I & -E
  \end{array}
\right),$$ and so
$$\left(
  \begin{array}{cc}
  E&F\\
I&0
\end{array}
\right)^d=\left(
  \begin{array}{cc}
    E &I \\
   I & 0
  \end{array}
\right)\left(
  \begin{array}{cc}
   E & I \\
   F &0
  \end{array}
\right)^d\left(
  \begin{array}{cc}
    0 &I \\
   I & -E
  \end{array}
\right).$$
Applying Theorem 2.6 to the matrix $\left(
  \begin{array}{cc}
   E &I \\
   F &0
  \end{array}
\right)$, we obtain the result.\end{proof}

\section*{Acknowledgement}

The authors would like to thank the referee for his/her helpful suggestions for the improvement of this paper.

\vskip10mm

\end{document}